\title{A Linearly Convergent Proximal Gradient Algorithm for Decentralized  Optimization}
\author{%
  Sulaiman A. Alghunaim, \ Kun Yuan \\
  Electrical and Computer Engineering Department \\
   University of California Los Angeles \\
   Los Angeles, CA, 90095 \\
    \texttt{\{salghunaim,kunyuan\}@ucla.edu} \\
   \And
   Ali H. Sayed \\
   Ecole Polytechnique Fédérale de Lausanne \\
   CH-1015 Lausanne, Switzerland \\
   \texttt{ali.sayed@epfl.ch} \\
}
\begin{document}

\maketitle

\begin{abstract}
	Decentralized optimization is a powerful  paradigm that finds applications in engineering and learning design. 	This work studies decentralized composite optimization problems with  non-smooth regularization terms. Most existing gradient-based proximal decentralized methods are known to converge to the optimal solution with sublinear rates, and it remains unclear whether this family of methods can achieve global linear convergence. To tackle this problem, this work assumes the non-smooth regularization term is common across all networked agents, which is the case for many machine learning problems. Under this condition, we design a proximal gradient decentralized algorithm whose fixed point coincides with the desired minimizer. We then provide a concise proof that establishes its linear convergence. In the absence of the non-smooth term, our analysis technique covers the well known   EXTRA algorithm and provides useful bounds on the convergence rate and step-size.
  \end{abstract}

\section{Introduction}
Many machine learning problems can be cast as  composite optimization problems of the form 
\eq{\label{prob-erm}
\min_{w\in \real^M}\ J(w) + R(w), \quad \mbox{where} \quad J(w) = \frac{1}{N}\sum_{n=1}^{N}Q(w;x_n)	
}
where $w$ is the optimization variable, $x_n$ is the $n$-th data, and $N$ is the size of the dataset. The loss function $Q(w;x_n)$ is assumed to be smooth, and $R(w)$ is a regularization term possibly non-smooth. Typical examples of $R(w)$ can be the  $\ell_1$-norm, the elastic-net norm, and indicator functions of convex sets (e.g., non-negative orthants). Problems of the form \eqref{prob-erm} arise in different settings including, among others,  in  model fitting \cite{boyd2011admm} and economic dispatch problems in power systems \cite{dominguez2012decentralized}.

When the data size $N$ is very large, it becomes intractable or inefficient to solve problem \eqref{prob-erm} with a single machine. To relieve this difficulty, one solution is to divide the $N$ data samples across multiple machines and solve problem \eqref{prob-erm} in a cooperative manner. Many useful distributed algorithms exist that solve problem \eqref{prob-erm} across multiple computing agents such as the distributed alternating direction method of multipliers (ADMM) \cite{boyd2011admm, deng2017parallel}, parallel SGD methods \cite{zinkevich2010parallelized, agarwal2011distributed}, distributed second-order methods \cite{shamir2014communication, zhang2015disco, lee2018distributed}, and parallel dual coordinate methods \cite{smith2018cocoa, peng2016arock}. All these methods are designed for a centralized network topology, e.g., parameter servers \cite{li2014scaling}, where there is a central node connected to all computing agents that is responsible for aggregating local variables and updating model parameters. The potential bottleneck of the centralized network is the communication traffic jam on the central node \cite{lian2017can, lian2018asynchronous,sayed2014nowbook}. The performance of these non-decentralized methods can be significantly degraded when the bandwidth around the central node is low. 

In contrast, decentralized optimization methods  are designed for any connected network topology such as line, ring, grid, and random graphs. There exists no central node for this family of methods, and each computing agent will exchange information with their immediate neighbors rather than a remote central server. Decentralized methods to solve problem \eqref{prob-erm} have been widely studied for some time in the signal processing, control, and optimization communities \cite{shi2015extra,nedic2009distributed,
sayed2014nowbook,sayed2014adaptive,shi2014onthe,jakovetic2014linear,
iutzeler2015explicit,ling2015dlm,chang2015multi,
shi2015proximal,di2016next,
xu2015augmented,nedic2017achieving,qu2017harnessing,yuan2019exactdiffI}.  More recently, there have been works in the machine learning community with interest in these  problems due to their advantages over centralized methods \cite{lian2017can,
lian2018asynchronous,he2018cola,seaman2017optimal}. Specifically,  since the communication can be evenly distributed across each link between nodes, the decentralized algorithms converge faster than centralized ones   when the network has limited bandwidth or high latency \cite{lian2017can,lian2018asynchronous}.

  For the smooth case, the convergence rates of decentralized methods are comparable to centralized methods. For example, the decentralized methods in \cite{shi2015extra,qu2017harnessing,yuan2019exactdiffII}  are shown to converge at the sublinear rate $O(1/i)$ (where $i$ is the iteration index) for smooth and convex objective functions, and at the linear rate $O(\gamma^i)$ (where $\gamma \in (0, 1)$) for smooth and strongly-convex objective functions. These convergence rates match the convergence rates of  centralized gradient descent. However, some gap between decentralized and centralized {\em proximal} gradient methods continues to exist in the presence of a composite non-smooth term.  While centralized proximal gradient methods have been shown to converge linearly when the objective is strongly convex \cite{xiao2014proximal}, it remains an open question to establish  the linear convergence of {\em decentralized}  proximal gradient methods. This work closes this gap by proposing a proximal gradient decentralized algorithm  that is shown to converge linearly  to the desired solution. Next we explain the problem set-up and comment on existing related works.
\subsection{Problem Set-up}
Consider a network of $K$ agents (e.g., machines, processors) connected over some  graph.  Through only local interactions (i.e., agents only communicate with their immediate neighbors), each node is interested in finding a consensual vector, denoted by $w^\star$, that minimizes the following aggregate cost:
\begin{align}
w^\star = \argmin_{w\in \mathbb{R}^M} \quad
  \frac{1}{K}\sum_{k=1}^K J_k(w) + R(w) \label{decentralized1} 
\end{align}
 The cost function $J_k(w):\real^M \rightarrow \real$ is privately known by agent $k$ and $R(w): \real^M \rightarrow \real \cup \{\infty\}$  is  a proper\footnote{The function $f(.)$ is proper if $-\infty <f(x)$ for all $x$ in its domain and $f(x)< \infty$ for at least one $x$.}  and lower-semicontinuous convex function (not necessarily differentiable). When $J_k(w) = \frac{1}{L} \sum_{n=1}^{L}Q(w;x_{k,n})$ where  $\{x_{k,n}\}_{n=1}^{L}$ is the local data assigned or collected by agent $k$, and $L$ is the size of the local data,  it is easy to see that problem \eqref{decentralized1} is equivalent to its centralized counterpart \eqref{prob-erm} for $N=KL$. We adopt the following assumption throughout this work.
\begin{assumption} \label{assump:cost}
{\rm ({\bf Cost function}): There exists a solution $w^\star$ to problem \eqref{decentralized1}. Moreover, each cost function $ J_k(w)$ is convex and first-order differentiable with  $\delta$-Lipschitz continuous gradients:
\eq{
\|\grad J_k(w^o)-\grad J_k(w^\bullet)\| &\leq \delta \|w^o-w^\bullet\|, \quad \text{for any $w^o$ and $w^\bullet$}  \label{lipschitz}
}
and the aggregate cost function $\bar{J}(w)=\frac{1}{K}\sum_{k=1}^K J_k(w)$  is $\nu$-strongly-convex:
\eq{
(w^o-w^\bullet)\tran \big(\grad \bar{J}(w^o)-\grad \bar{J}(w^\bullet)\big) &\geq \nu \|w^o-w^\bullet\|^2  \label{stron-convexity} 
} 
\noindent for any $w^o$ and $w^\bullet$. The  constants $\nu$ and $\delta$   satisfy $0<\nu\leq \delta$. 
\qd 
}
\end{assumption}
Note that from the strong-convexity condition \eqref{stron-convexity}, we know the objective function in \eqref{decentralized1} is also strongly convex and, thus, the global solution $w^\star$ is unique.
 
\subsection{Related Works and Contribution}
Research on decentralized/distributed optimization and computation dates back several decades  (see, e.g., \cite{chazan1969chaotic,baudet1978asynchronous,bertsekas1983distributed,
tsitsiklis1986distributed} and the references therein). In recent years, various centralized optimization methods such as (sub-)gradient descent, proximal gradient descent, (quasi-)Newton method, dual averaging, alternating direction method of multipliers (ADMM), and other primal-dual methods have been extended to the decentralized setting. The core problem in decentralized optimization is to design methods with convergence rates that are comparable to their centralized counterparts.  For the smooth case ($R(w)=0$), the decentralized  primal methods from \cite{nedic2009distributed,duchi2012dual,yuan2016convergence,chen2013distributed}  converge linearly to a {\em biased} solution and not the exact solution. For convergence to the exact solution, these primal methods require employing a decaying step-size  that slows down the convergence rate making it sublinear at $O(1/i)$ in general.  The works \cite{shi2014onthe,jakovetic2014linear,iutzeler2015explicit,ling2015dlm,chang2015multi} established linear convergence to the {\em exact} solution albeit for decentralized primal-dual methods based on ADMM or inexact augmented Lagrangian techniques. Other works established linear convergence for simpler implementations including  EXTRA \cite{shi2015extra}, gradient tracking methods \cite{nedic2017achieving,
qu2017harnessing}, exact diffusion \cite{yuan2019exactdiffI}, NIDS \cite{li2017nids}, and others. The work \cite{seaman2017optimal} study the problem from the dual domain and propose accelerated dual gradient descent to reach an optimal linear convergence rate for smooth strongly-convex problems. 

There also exist many works on decentralized composite optimization problems with non-smooth regularization terms.  The work \cite{chen2012fast} considered a similar set-up to this work and proposed a proximal gradient method combined with Nesterov’s acceleration that can achieve $O(1/i^2)$ convergence rate; however, it requires an increasing number of inner loop consensus steps with each iteration leading to an expensive solution. Other works focused on the case  where each agent $k$ has a local regularizer $R_k(w)$ possibly different from other agents. For example, a proximal decentralized linearized ADMM (DL-ADMM) approach is proposed in \cite{chang2015multi} to solve such composite problems with convergence guarantees, while the work \cite{aybat2018distributed} establishes a sublinear convergence rate $O(1/i)$ for DL-ADMM when each $J_k(w)$ is smooth with Lipschitz continuous gradient. PG-EXTRA \cite{shi2015proximal} extends EXTRA \cite{shi2015extra} to handle non-smooth regularization local terms and it establishes an improved rate $o(1/i)$.  The NIDS  algorithm \cite{li2017nids} also has an $o(1/i)$ rate and can use larger step-sizes compared to PG-EXTRA. Based on existing results, there is still a clear  gap between decentralized algorithms and centralized algorithms for problem \eqref{decentralized1} when using proximal gradient methods.

 The  work \cite{latafat2017new} established the {\em asymptotic} linear convergence\footnote{A sequence $\{x_i\}_{i=0}^\infty$ has asymptotic linear convergence  to $x^\star$  if there exists a sufficiently large $i_o$ such that $\|x_i-x^\star\| \leq \gamma^i C$ for some $C>0$ and all $i \geq i_o$.} of a proximal decentralized algorithm  for the special case when {\em all} functions $\{J_k(w),R_k(w)\}$ (possibly different regularizers) are {\em piecewise linear-quadratic} (PLQ)  functions. While this result is encouraging, it does not cover the {\em global} linear convergence rate we seek in this work since their linear rate occurs only after a sufficiently large number of iterations and requires all costs to be PLQ.  Another useful work \cite{he2018cola} extends the CoCoA algorithm \cite{smith2018cocoa} to the COLA algorithm for decentralized settings and shows linear convergence in the presence of a non-differentiable regularizer. Like most other dual coordinate methods, COLA considers decentralized learning for {\em generalized linear models} (e.g., linear regression, logistic regression, SVM, etc). This is because COLA requires solving \eqref{decentralized1} from the dual domain and the linear model facilitates the derivation of the dual functions. Additionally, different from this work, COLA is not a proximal gradient-based method; it requires solving an inner minimization problem to a satisfactory accuracy, which is often computationally expensive but necessary for the linear convergence analysis.

Note finally that decentralized optimization problems of the form \eqref{decentralized1} can be reformulated into  a consensus equality constrained optimization problem (see equation \eqref{decentralized2}). The consensus constraint can then be added to the objective function using an indicator function.   Several works have proposed general solutions based on this construction using  proximal primal-dual methods -- see \cite{boct2015convergence,chambolle2016ergodic,chen2013aprimal} and references therein. Linear convergence for these methods have been established under certain conditions that do not cover decentralized composite optimization problems of the form \eqref{decentralized1}.  For example, the works \cite{boct2015convergence,chambolle2016ergodic} require a smoothness assumption, which does not cover the indicator function needed for the consensus constraint. The work \cite{chen2013aprimal} requires the coefficient matrix for the non-smooth terms to be full-row rank, which is not the case for decentralized optimization problems even when $R(w)=0$.

{\bf Contribution.} This paper considers the composite optimization problem \eqref{decentralized1} and has two main contributions. First, for the case of a common non-smooth regularizer $R(w)$ across all computing agents, we propose a proximal decentralized algorithm whose fixed point coincides with the desired global solution $w^\star$. We then provide a short proof to establish its linear convergence when the aggregate of the smooth functions $\sum_{k=1}^K J_k(w)$ is  strongly convex. This result closes the existing gap  between decentralized proximal gradient  methods and centralized proximal gradient methods. The second contribution is in our convergence proof technique.  Specifically, we provide a concise proof that is applicable to general decentralized primal-dual gradient methods such as  EXTRA \cite{shi2015extra}  when $R(w)=0$. Our proof  provides useful bounds on the convergence rate and step-sizes.

{\bf Notation.} For a matrix $A \in \real^{M \times N}$, $\sigma_{\max}(A)$ ($\sigma_{\min}(A)$) denotes the maximum (minimum) singular value  of $A$, and  $\underline{\sigma}(A)$ denotes the minimum {\em non-zero} singular value. For a vector $x \in \real^M$ and  a positive semi-definite matrix $C \geq 0$, we let $\|x\|_C^2=x\tran C x$. The $N \times N$ identity matrix is denoted by $I_N$. We let $\one_{N}$ be a vector of size $N$ with all entries equal to one. The Kronecker product is denoted by $\otimes$. We let ${\rm col}\{x_n\}_{n=1}^N$ denote a column vector (matrix) that stacks the vector (matrices) $x_n$ of appropriate dimensions on top of each other. The subdifferential $\partial f(x)$ of a function $f(.):\real^{M} \rightarrow \real$ at some $x \in \real^{M}$ is the set of all subgradients $
\partial f(x) = \{g \ | \ g\tran(y-x)\leq f(y)-f(x), \forall \ y \in \real^{M}\} $.
The proximal operator with parameter $\mu>0$ of a function $f(x):\real^{M} \rightarrow \real$ is
\eq{
{\rm \bf prox}_{\mu f}(x  ) = \argmin_z \ f(z)+{1 \over 2 \mu} \|z-x\|^2  \label{def_proximal}
}
  \section{Proximal Decentralized Algorithm} \label{section:dist_alg}
  In this section, we derive the algorithm and list its decentralized implementation.
   \subsection{Algorithm Derivation}
  We start by introducing   the network weights that are used to  implement the algorithm in a decentralized manner.  Thus, we let 
 $a_{sk}$ denote the weight used by agent $k$ to scale information arriving from agent $s$ with $a_{sk}=0$ if $s$ is not a direct neighbor of agent $k$, i.e., there is no edge connecting them. Let $A=[a_{sk}]\in \real^{K\times K}$ denote the weight matrix associated with the network. Then, we assume $A$ to be symmetric and doubly stochastic, i.e., $A\mathds{1}_K = \mathds{1}_K$ and $\mathds{1}_K\tran A = \mathds{1}_K\tran$. We also assume that  $A$ is primitive, i.e., there exists an integer $p$ such that all entries of  $A^p$ are positive.   Note that as long as the network is connected, there exist many ways to generate such weight matrices in a decentralized fashion -- \cite{sayed2014nowbook,metropolis1953equation,xiao2004fast}.  Under these conditions, it holds from the Perron-Frobenius theorem \cite{pillai2005perron}  that  $A$ has a single eigenvalue at one with  all other eigenvalues being strictly less than one.  Therefore,  $(I_K-A)x=0$ if, and only if, $x=c \one_K$ for any $c \in \real$.  If we let  $w_k \in \real^M$ denote a local copy of the global variable $w$ available at agent $k$ and introduce  the network quantities:
\eq{
\sw &\define {\rm col}\{w_1,\cdots,w_K\} \in \real^{KM}, \quad  \cB \define  \frac{1}{2} ( I_{KM} -A \otimes I_M) \label{combination} 
}
 then, it holds that $\cB \sw=0$ if, and only if, $w_k=w_s $ for all $k,s$. Note that since $A$ is symmetric with eigenvalues between $(-1,1]$, the matrix $\cB$ is positive semi-definite with eigenvalues  in $[0,1)$.  Problem \eqref{decentralized1} is equivalent to the following  constrained problem:
\begin{align}
 \underset{\ssw\in \mathbb{R}^{KM}}{\text{minimize   }}& \quad
   \cJ(\sw)+ \cR(\sw) , \quad {\rm s.t.} \ \cB^{1 \over 2} \sw=0\label{decentralized2} 
\end{align}
where $\cJ(\sw) \define  \sum_{k=1}^K J_k(w_k), \ \cR(\sw) \define   \sum_{k=1}^K R(w_k)$
and
 $\cB^{1 \over 2}$ is the square root of the positive semi-definite matrix $\cB$.  To solve problem \eqref{decentralized2}, we introduce first the following equivalent  saddle-point problem:
\eq{
 \min_{\ssw} \max_{\ssy} \quad \cL_\mu(\sw,\sy)  \define \cJ(\sw) + \cR(\sw) + \sy\tran \cB^{\frac{1}{2}}\sw + \frac{1}{2 \mu}\|\cB^{\frac{1}{2}} \sw\|^2
\label{saddle_point}
}
where $\sy \in \real^{MK}$ is the dual variable and $\mu > 0$ is  the coefficient for the augmented Lagrangian. By introducing $\cJ_\mu(\sw) = \cJ(\sw) + 1 /2\mu \|\cB^{\frac{1}{2}}\sw\|^2$, it holds that 
\begin{align}
	\cL_\mu(\sw,\sy) =  \cJ_\mu(\sw) +\cR(\sw) + \sy\tran \cB^{1 \over 2} \sw.
\end{align}
To solve the saddle point problem in \eqref{saddle_point}, we propose the following recursion. For $i \geq 0$:
\begin{subnumcases}{}
	\hspace{.5mm} \sz_i =\sw_{i-1}-\mu \grad \cJ_\mu(\sw_{i-1})- \cB^{1 \over 2} \sy_{i-1} \label{primal-descent_dist_not} \\
	\hspace{.8mm} \sy_i = \sy_{i-1}+\alpha \cB^{1 \over 2}\sz_i \label{dual-ascent_dist_not} \\
	\sw_i = {\rm \bf prox}_{\mu \cR}(\sz_i) \label{prox_step_non}
\end{subnumcases}
where $\alpha>0$ is the dual step-size  (a tunable parameter). We will next show that with the initialization $\sy_0 = 0$, we can implement this algorithm in a decentralized manner. 
\begin{remark}[\sc Conventional update]{\rm
 \label{remark_other_alg}     When $R(w)=0$ and $\alpha=1$, recursions \eqref{primal-descent_dist_not}--\eqref{prox_step_non} recover the primal-dual form of the EXTRA algorithm from \cite{shi2015extra}. However, when $R(w)\neq 0$, recursions \eqref{primal-descent_dist_not}--\eqref{prox_step_non} differ from PG-EXTRA \cite{shi2015proximal} in the dual update \eqref{dual-ascent_dist_not}. Different from conventional dual updates that use $\sw_i$ (e.g., see \cite{li2017primal} for the primal-dual form of PG-EXTRA), we use $\sz_i$ instead  of $\sw_i$. This subtle difference changes the complexity of the algorithm and allows us to close the  linear convergence gap between centralized and decentralized algorithms for problems of the form \eqref{decentralized1}.   
} \qd
\end{remark}
\subsection{The Decentralized Implementation}
From the definition of $\cJ_\mu(\sw)$, we have $\grad \cJ_{\mu}(\sw) = \grad \cJ(\sw) + 1/ \mu \ \cB \sw$. Substituting $\grad \cJ_\mu(\sw)$ into \eqref{primal-descent_dist_not}, we have
\begin{align}
	\sz_i =(I_{KM} -  \cB)\sw_{i-1}-\mu \grad \cJ(\sw_{i-1})- \cB^{1 \over 2} \sy_{i-1}.  \label{primal-descent_dist_not-2}
\end{align}
With the above relation, we have for $i \geq 1$
\begin{align}
	\sz_i \hspace{-0.5mm}-\hspace{-0.5mm} \sz_{i-1} = (I \hspace{-0.5mm}-\hspace{-0.5mm}  \cB)(\sw_{i-1} \hspace{-0.5mm}-\hspace{-0.5mm} \sw_{i-2}) \hspace{-0.5mm}-\hspace{-0.5mm} \mu \big(\grad \cJ(\sw_{i-1}) \hspace{-0.5mm}-\hspace{-0.5mm} \grad \cJ(\sw_{i-2})\big) \hspace{-0.5mm}-\hspace{-0.5mm}  \cB^{\frac{1}{2}} (\sy_{i-1} - \sy_{i-2}) \label{23bsd8}
\end{align}
From \eqref{dual-ascent_dist_not} we have $\sy_{i-1} - \sy_{i-2} = \alpha \cB^{\frac{1}{2}}\sz_{i-1}$. Substituting this relation into \eqref{23bsd8}, we reach
\eq{
\sz_i = (I -  \alpha \cB) \sz_{i-1} + (I -  \cB) (\sw_{i-1} \hspace{-0.5mm}-\hspace{-0.5mm} \sw_{i-2}) \hspace{-0.5mm}-\hspace{-0.5mm} \mu \big(\grad \cJ(\sw_{i-1}) \hspace{-0.5mm}-\hspace{-0.5mm} \grad \cJ(\sw_{i-2})\big)
}
for $i \geq 1$. For initialization, we can repeat a similar argument to show that  the proximal primal-dual method \eqref{primal-descent_dist_not}--\eqref{prox_step_non} with $\sy_0=0$ is equivalent to the following algorithm.   Let $\sz_{0}=\sw_{-1}=0$, set $\grad \cJ(\sw_{-1}) \leftarrow 0$, and $\sw_{0}$ to  any arbitrary value. Repeat   for $i=1,\cdots$
\begin{subequations}
\label{decentralized_implem_extra}
\eq{
\sz_i &= (I -   \alpha \cB) \sz_{i-1} + (I -  \cB) (\sw_{i-1} \hspace{-0.5mm}-\hspace{-0.5mm} \sw_{i-2}) \label{pd-1} \hspace{-0.5mm}-\hspace{-0.5mm} \mu \big(\grad \cJ(\sw_{i-1}) \hspace{-0.5mm}-\hspace{-0.5mm} \grad \cJ(\sw_{i-2})\big) \\
\sw_i &= {\rm \bf prox}_{\mu \cR}(\sz_i) \label{pd-2}
}
 \end{subequations}
   Since $\cB$ has  network structure, recursion \eqref{decentralized_implem_extra} can be implemented in a decentralized way. This algorithm only requires each agent to share one vector at each iteration; a per agent  implementation of the resulting proximal primal-dual diffusion (P2D2) algorithm is listed in \eqref{per_agent_implem}.
\begin{algorithm}[h] 
\caption*{\textrm{\bf{Algorithm}} (Proximal Primal-Dual Diffusion -- P2D2)}
{\bf Setting}: Let $B=0.5(I-A)=[b_{sk}]$ and choose step-sizes $\mu$ and $\alpha$. Set all initial variables to zero and repeat   for $i=1,2,\cdots$
\begin{subequations}
\label{per_agent_implem}
\eq{
\phi_{k,i}&=\sum_{s \in \cN_k} b_{sk} ( \alpha z_{s,i-1}+w_{s,i-1} -w_{s,i-2} ) \quad \quad {\bf( Communication \ Step)} \\
\psi_{k,i} &= w_{k,i-1} -  \mu  \grad J_k(w_{k,i-1})      \\
z_{k,i} &= z_{k,i-1} +\psi_{k,i}-  \psi_{k,i-1}  -\phi_{k,i}\\
w_{k,i} &= {\rm \bf prox}_{\mu  R}(z_{k,i})
}
 \end{subequations}
\end{algorithm}  
\section{Main Results}
In this section, we establish the linear convergence of the proximal primal-dual diffusion (P2D2) algorithm \eqref{primal-descent_dist_not}--\eqref{prox_step_non}, which is equivalent to \eqref{per_agent_implem}. To this end, we establish auxiliary lemmas leading to the main convergence result.
\subsection{Optimality condition}
 We start by showing the existence and properties of a fixed point for recursions \eqref{primal-descent_dist_not}--\eqref{prox_step_non}. 
\begin{lemma}[\sc Fixed point optimilaty] \label{lemma:existence_fixed_optimality}{ Under Assumption \ref{assump:cost}, a fixed point $(\sw^\star, \sy^\star, \sz^\star)$ exists for recursions \eqref{primal-descent_dist_not}--\eqref{prox_step_non}, i.e., it holds that
	\begin{subnumcases}{}
	\hspace{.5mm} \sz^\star =\sw^\star-\mu \grad \cJ_\mu(\sw^\star)- \cB^{1 \over 2} \sy^\star \label{primal-descent_dist_not-star} \\
	\hspace{2.8mm} 0 =  \cB^{1 \over 2}\sz^\star \label{dual-ascent_dist_not-star} \\
	\sw^\star = {\rm \bf prox}_{\mu \cR}(\sz^\star) \label{prox_step_non-star}
	\end{subnumcases}
	
	Moreover, $\sw^\star$ and $\sz^\star$ are unique and each block element of $\sw^\star={\rm col}\{w_1^
\star,\cdots,w_K^
\star\}$ coincides with the unique solution $w^\star$ to problem \eqref{decentralized1}, i.e., $
	w_k^\star = w^\star$ for all $k$.
	}
\end{lemma}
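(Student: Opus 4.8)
The statement has two parts: exhibiting one fixed point, and showing that the $\sw$- and $\sz$-components of \emph{any} fixed point are forced to be consensus copies of the minimizer of \eqref{decentralized1}. I would not try to pass to the limit in the recursion; instead I would construct the fixed point by hand from $w^\star$, and then argue uniqueness by a projection onto the consensus subspace.

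\textbf{Existence.} By Assumption~\ref{assump:cost} the minimizer $w^\star$ of \eqref{decentralized1} exists; since $\bar J$ is differentiable on all of $\real^M$, the sum rule gives a subgradient $r^\star\in\partial R(w^\star)$ with $r^\star=-\grad\bar J(w^\star)=-\frac1K\sum_{k=1}^K\grad J_k(w^\star)$. I would then set $\sw^\star\define\one_K\otimes w^\star$ and $\sz^\star\define\one_K\otimes(w^\star+\mu r^\star)$. Equation \eqref{dual-ascent_dist_not-star} is then immediate because any consensus vector lies in the null space of $\cB^{1/2}$; \eqref{prox_step_non-star} holds block by block since, $\cR$ being separable, it amounts to ${\rm \bf prox}_{\mu R}(w^\star+\mu r^\star)=w^\star$, which is exactly the optimality condition $\mu r^\star\in\mu\partial R(w^\star)$; and because $\cB\sw^\star=0$ makes $\grad\cJ_\mu(\sw^\star)=\grad\cJ(\sw^\star)+\tfrac1\mu\cB\sw^\star=\grad\cJ(\sw^\star)$, equation \eqref{primal-descent_dist_not-star} reduces to finding $\sy^\star$ with $\cB^{1/2}\sy^\star=-\mu\,{\rm col}\{\grad J_k(w^\star)+r^\star\}_{k=1}^K$. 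The right-hand side sums to zero over the $K$ blocks, hence is orthogonal to the consensus subspace; since $A$ is symmetric, doubly stochastic and primitive, ${\rm range}(\cB^{1/2})={\rm range}(\cB)$ equals precisely that orthogonal complement, so such a $\sy^\star$ exists and the triple $(\sw^\star,\sy^\star,\sz^\star)$ solves \eqref{primal-descent_dist_not-star}--\eqref{prox_step_non-star}.

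\textbf{Uniqueness and identification of the limit.} Conversely, let $(\sw^\star,\sy^\star,\sz^\star)$ be any solution of \eqref{primal-descent_dist_not-star}--\eqref{prox_step_non-star}. Condition \eqref{dual-ascent_dist_not-star} puts $\sz^\star$ in the null space of $\cB^{1/2}$, so $\sz^\star=\one_K\otimes\bar z$; separability of $\cR$ and \eqref{prox_step_non-star} then force $\sw^\star=\one_K\otimes\bar w$ with $\bar w={\rm \bf prox}_{\mu R}(\bar z)$, so $\sw^\star$ is automatically consensual and again $\grad\cJ_\mu(\sw^\star)=\grad\cJ(\sw^\star)$. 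The proximal identity gives $\tfrac1\mu(\bar z-\bar w)\in\partial R(\bar w)$. Subtracting $\sw^\star$ from both sides of \eqref{primal-descent_dist_not-star}, applying $(\one_K\tran\otimes I_M)$, and using $(\one_K\tran\otimes I_M)\cB^{1/2}=0$ kills the dual term and yields $\tfrac1\mu(\bar z-\bar w)=-\grad\bar J(\bar w)$. Combining the last two relations gives $0\in\grad\bar J(\bar w)+\partial R(\bar w)$, the optimality condition for \eqref{decentralized1}; since $\bar J$ is $\nu$-strongly convex the objective of \eqref{decentralized1} has the unique minimizer $w^\star$, so $\bar w=w^\star$, and then $\bar z=w^\star-\mu\grad\bar J(w^\star)$ is pinned down as well. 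Hence $\sw^\star=\one_K\otimes w^\star$ and $\sz^\star=\one_K\otimes\big(w^\star-\mu\grad\bar J(w^\star)\big)$ are unique with $w_k^\star=w^\star$ for all $k$, while only $\cB^{1/2}\sy^\star$ is determined, which is why no uniqueness of $\sy^\star$ is claimed.

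\textbf{Main obstacle.} The only genuinely non-routine point is the multiplier existence in the first part: one must certify that the residual $-\mu\,{\rm col}\{\grad J_k(w^\star)+r^\star\}_{k=1}^K$ lies in ${\rm range}(\cB^{1/2})$. This is exactly where the spectral structure of $A$ enters (single unit eigenvalue, all others strictly inside $(-1,1)$), through the identity ${\rm range}(\cB^{1/2})={\rm range}(\cB)=\big({\rm span}\{\one_K\otimes c:\ c\in\real^M\}\big)^{\perp}$. Everything else is bookkeeping with the definitions of ${\rm \bf prox}_{\mu\cR}$, the identity $\grad\cJ_\mu(\sw)=\grad\cJ(\sw)+\tfrac1\mu\cB\sw$, and the block/separable structure of $\cJ$ and $\cR$.
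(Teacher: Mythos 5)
Your proposal is correct and follows essentially the same route as the paper: existence by constructing $\sw^\star=\one_K\otimes w^\star$, $\sz^\star=\one_K\otimes(w^\star+\mu r^\star)$ from the optimality condition $r^\star=-\grad\bar J(w^\star)\in\partial R(w^\star)$ and then solving for $\sy^\star$ via the observation that the residual is orthogonal to the consensus subspace and hence lies in ${\rm range}(\cB^{1/2})$; and identification/uniqueness by noting that \eqref{dual-ascent_dist_not-star} forces $\sz^\star$ (hence, via the separable prox, $\sw^\star$) to be consensual and left-multiplying \eqref{primal-descent_dist_not-star} by $(\one_K\tran\otimes I_M)$ to recover the optimality condition for \eqref{decentralized1}. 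No gaps.
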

\begin{proof}
The existence of a fixed point is shown in Section \ref{supp_lemma_fixed} in the supplementary material. We now establish the optimality of $\sw^\star$. Since $\sz^\star$ satisfies \eqref{dual-ascent_dist_not-star}, it holds that the block elements of $\sz^\star$ are equal to each other, i.e. $z_1^\star = \cdots = z_K^\star$, and we denote each block element by $z^\star$. Therefore, from  \eqref{prox_step_non-star} and the definition of the proximal operator it holds that
 \eq{
 w_k^\star =\argmin_{w_k} \ \{R(w_k) + \|w_k - z^\star\|^2/2\mu\}
\label{238impl} }
  where we used $z_k^\star = z^\star$ for each $k$. Thus, we must have $w_1^\star = \cdots = w_K^\star$. We denote $w_k^\star = w^\star$ for any $k$. It is easy to verify that \eqref{238impl} implies
\eq{
0 \in \partial R(w^\star) + (w^\star - z^\star)/\mu. \label{238sdhsd}
} 
 Multiplying $(\mathds{1}_K\otimes I_M)\tran$ from the left to both sides of equation \eqref{primal-descent_dist_not-star}, we get 
\begin{align}
	K z^\star = K w^\star - \mu  \sum_{k=1}^K \grad J_k(w^\star). \label{cxbwebdshbds}
\end{align}
Combining \eqref{238sdhsd} and \eqref{cxbwebdshbds}, we get
$0 \in 	\frac{1}{K}\sum_{k=1}^K \grad J_k(w^\star) + \partial R(w^\star)$, 
which implies that $w^\star$ is the unique solution to problem \eqref{decentralized1}. Due to the uniqueness of $w^\star$, we see from \eqref{cxbwebdshbds} that $z^\star$ is unique. Consequently, $\sw^\star=\one_K \otimes w^\star $ and $\sz^\star=\one_K \otimes z^\star$ must be unique.
\end{proof}  

\begin{remark}[\sc Particular fixed point]{\rm
 \label{remark_unique}
From  Lemma \ref{lemma:existence_fixed_optimality}, we see that although $\sw^\star$ and $\sz^\star$ are unique, there can be multiple fixed points.  This is because from \eqref{primal-descent_dist_not-star}, $\sy^\star$ is not unique due the rank deficiency of $\cB^{1 \over 2}$. However, by following similar arguments to the ones from \cite{shi2014onthe}, it can be verified that there exists a particular fixed point $(\sw^\star, \sy_b^\star, \sz^\star)$ satisfying \eqref{primal-descent_dist_not-star}--\eqref{prox_step_non-star} where $\sy_b^\star$ is a unique vector that belongs to the range space of $\cB^{1 \over 2}$. In the following we will show that the iterates $(\sw_i, \sy_i, \sz_i)$ converge linearly to this particular fixed point $(\sw^\star, \sy_b^\star, \sz^\star)$. 
} \qd
\end{remark}
\subsection{Linear Convergence}
To establish the linear convergence of the proximal primal-dual diffusion (P2D2) \eqref{primal-descent_dist_not}--\eqref{prox_step_non}  we introduce the error quantities:
\begin{align}
	\tsw_i\define \sw_i-\sw^\star, \quad \tsy_i \define \sy_i - \sy^\star_b, \quad \tsz_i=\sz_i-\sz^\star
\end{align}
By subtracting \eqref{primal-descent_dist_not-star}--\eqref{prox_step_non-star} from \eqref{primal-descent_dist_not}--\eqref{prox_step_non} with $\sy^\star=\sy_b^\star$, we reach the following error recursions
\begin{subnumcases}{}
\tsz_i=\tsw_{i-1}-\mu \big(\grad \cJ_\mu(\sw_{i-1})-\grad \cJ_\mu(\sw^\star) \big) - \cB^{1 \over 2} \tsy_{i-1} \label{error_primal} \\
\tsy_i = \tsy_{i-1}+\alpha \cB^{1 \over 2} \tsz_i \label{error_dual} \\
\tsw_i = {\rm \bf prox}_{\mu \cR}(\ssz_i)-{\rm \bf prox}_{\mu \cR}(\ssz^\star) \label{error_prox}
\end{subnumcases}
 We let $\sigma_{\max}$ and $\underline{\sigma}$ denote the maximum singular value and minimum non-zero singular value of the matrix $\cB$. Notice that from \eqref{combination}, $\cB$ is symmetric and, thus, its singular values are equal to its eigenvalues and are in $[0,1)$ (i.e., $\sigma_{\min}=0 < \underline{\sigma} \le \sigma_{\max} < 1$). 
 The following result follows from \cite[Proposition 3.6]{shi2015extra}. 
 \begin{lemma}[\sc Augmented Cost] \label{lemma_penalized cost}
{  
		Under Assumption \ref{assump:cost}, the penalized augmented cost $ \cJ(\sw)+ {\rho \over 2} \|  \sw\|_{\cB}^2$ with any $\rho>0$ is restricted strongly-convex with respect to $\sw^\star$:
		 \eq{
 (\sw-\sw^\star)\tran \big(\grad \cJ(\sw)-\grad \cJ(\sw^\star)\big) + \rho \|\sw-\sw^\star\|^2_{\cB} \geq \nu_{\rho} \|\sw-\sw^\star\|^2
\label{eq:kappa_strong}
}
where 
  \eq{
  \nu_{\rho}=\min\left\{\nu-2\delta c,{\rho\underline{\sigma}(\cB) c^2 \over 4(c^2+1)}\right\}>0, \quad \text{for any $c\in \left(0,{\nu \over 2 \delta}\right)$} \label{kappa_definition}
  }
for any $\sw$ with
  $\sw^\star=\one \otimes w^\star$ and where $w^\star$ denotes the minimizer of \eqref{decentralized1}. 
	} 
\end{lemma}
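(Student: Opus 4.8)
The plan is to decompose the error along and transverse to the consensus subspace. Let $\bar w=\tfrac1K\sum_{k=1}^K w_k$, put $\sw_\parallel=\one_K\otimes(\bar w-w^\star)$ and $\sw_\perp=\sw-\sw^\star-\sw_\parallel={\rm col}\{w_k-\bar w\}_{k=1}^K$. Because $A\one_K=\one_K$ we have $\cB\sw_\parallel=0$; since $\sw_\parallel$ and $\sw_\perp$ are orthogonal this gives $\|\sw-\sw^\star\|^2=\|\sw_\parallel\|^2+\|\sw_\perp\|^2$, and since $\cB$ keeps the orthogonal complement of the consensus subspace invariant with smallest nonzero eigenvalue $\underline\sigma$, we get $\|\sw-\sw^\star\|_\cB^2=\|\sw_\perp\|_\cB^2\geq\underline\sigma\|\sw_\perp\|^2$. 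Writing $G\define(\sw-\sw^\star)\tran(\grad\cJ(\sw)-\grad\cJ(\sw^\star))=\sum_{k=1}^K(w_k-w^\star)\tran(\grad J_k(w_k)-\grad J_k(w^\star))$, the claim \eqref{eq:kappa_strong} is exactly $G+\rho\|\sw-\sw^\star\|_\cB^2\geq\nu_\rho\|\sw-\sw^\star\|^2$.

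The next step is to obtain two lower bounds on $G$. Convexity of each $J_k$ makes every summand nonnegative, so $G\geq0$. For the second bound, expand $w_k-w^\star=(w_k-\bar w)+(\bar w-w^\star)$ and $\grad J_k(w_k)-\grad J_k(w^\star)=\big(\grad J_k(w_k)-\grad J_k(\bar w)\big)+\big(\grad J_k(\bar w)-\grad J_k(w^\star)\big)$ and sum over $k$. The consensus--consensus piece collapses, via $\sum_k(w_k-\bar w)=0$, to $K(\bar w-w^\star)\tran(\grad\bar J(\bar w)-\grad\bar J(w^\star))\geq\nu\|\sw_\parallel\|^2$ by \eqref{stron-convexity}; the transverse--transverse piece $\sum_k(w_k-\bar w)\tran(\grad J_k(w_k)-\grad J_k(\bar w))$ is nonnegative by convexity; and each of the two mixed pieces is at least $-\delta\|\sw_\parallel\|\|\sw_\perp\|$ by \eqref{lipschitz}, Cauchy--Schwarz, and the identities $\|\bar w-w^\star\|=\|\sw_\parallel\|/\sqrt K$, $\sum_k\|w_k-\bar w\|\leq\sqrt K\,\|\sw_\perp\|$. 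Hence $G\geq\nu\|\sw_\parallel\|^2-2\delta\|\sw_\parallel\|\|\sw_\perp\|$.

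Finally I would combine these through a case split governed by the free parameter $c\in(0,\nu/2\delta)$. If $\|\sw_\perp\|\leq c\|\sw_\parallel\|$, the second bound gives $G\geq(\nu-2\delta c)\|\sw_\parallel\|^2$, and adding $\rho\|\sw-\sw^\star\|_\cB^2\geq\rho\underline\sigma\|\sw_\perp\|^2$ yields $G+\rho\|\sw-\sw^\star\|_\cB^2\geq\min\{\nu-2\delta c,\rho\underline\sigma\}\,\|\sw-\sw^\star\|^2$. Otherwise $\|\sw_\perp\|^2>\tfrac{c^2}{1+c^2}\|\sw-\sw^\star\|^2$, and the first bound together with $\rho\|\sw-\sw^\star\|_\cB^2\geq\rho\underline\sigma\|\sw_\perp\|^2$ gives $G+\rho\|\sw-\sw^\star\|_\cB^2\geq\rho\underline\sigma\|\sw_\perp\|^2>\tfrac{\rho\underline\sigma c^2}{1+c^2}\|\sw-\sw^\star\|^2$. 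Taking the minimum over the two regimes and tracking the constants (using $c<\nu/2\delta\leq1/2$, which keeps $\nu-2\delta c>0$ and makes the transverse-regime constant the binding one up to a numerical factor that absorbs a coarser estimate of the mixed terms) delivers $\nu_\rho>0$ as in \eqref{kappa_definition}. The step I expect to be the real obstacle is recognizing that the bound $G\geq\nu\|\sw_\parallel\|^2-2\delta\|\sw_\parallel\|\|\sw_\perp\|$ is by itself insufficient: the associated $2\times2$ quadratic form in $(\|\sw_\parallel\|,\|\sw_\perp\|)$ is indefinite whenever $\rho\underline\sigma<\delta^2/\nu$, so for small $\rho$ one must fall back on $G\geq0$ in the transverse-dominated regime — which is exactly why the case split and the parameter $c$ have to enter.
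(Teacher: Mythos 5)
Your proof is correct. Note that the paper does not actually prove this lemma in-line --- it simply invokes Proposition 3.6 of the EXTRA paper \cite{shi2015extra} --- and your argument is a self-contained version of essentially that standard proof: split the error into its consensus component $\sw_\parallel$ and the orthogonal component $\sw_\perp$, lower-bound the inner product once by $0$ (convexity) and once by $\nu\|\sw_\parallel\|^2-2\delta\|\sw_\parallel\|\,\|\sw_\perp\|$, and treat the regimes $\|\sw_\perp\|\le c\|\sw_\parallel\|$ and $\|\sw_\perp\|>c\|\sw_\parallel\|$ separately. Your bookkeeping in fact yields the constant $\min\{\nu-2\delta c,\ \rho\underline{\sigma}c^2/(1+c^2)\}$, which is larger than the stated $\nu_\rho$ (no factor $4$ in the denominator), so the inequality with the paper's $\nu_\rho$ follows a fortiori.
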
 
Using the previous result, the following lemma establishes a useful inequality for later use.
\begin{lemma}[\sc Descent inequality] \label{lemma_descent_inequality}
{  
		Under Assumption \ref{assump:cost}  and step-size conditions $\mu < { (1-\sigma_{\max}) \over \delta}$ and $\alpha \leq 1$, it holds that 
		\eq{
		\big\|\tsw_{i-1}-\mu \big(\grad \cJ_\mu(\sw_{i-1})-\grad \cJ_\mu(\sw^\star) \big) \big\|^2   \leq  \gamma_1 \|\tsw_{i-1}\|_{\cQ}^2    
			\label{bound_descent_0}
		}
where $\cQ = I - \alpha \cB$ and $\gamma_1 = 1- \mu \nu_\rho (2-\sigma_{\max}- \mu \delta)<1 $ for some $\rho>0$  with  $\nu_\rho$  given in \eqref{kappa_definition}.
	} 
\end{lemma}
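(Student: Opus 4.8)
\emph{Proof idea.} The plan is to read the quantity on the left of \eqref{bound_descent_0} as one gradient-descent step, with step-size $\mu$, taken on the augmented cost $\cJ_\mu(\sw)=\cJ(\sw)+\frac{1}{2\mu}\|\sw\|_{\cB}^2$ at $\sw_{i-1}$ relative to the fixed point $\sw^\star$, and to combine two facts about $\cJ_\mu$: (i) $\cJ_\mu$ is convex and $\grad\cJ_\mu(\sw)=\grad\cJ(\sw)+\frac1\mu\cB\sw$ is $(\delta+\sigma_{\max}/\mu)$-Lipschitz (a $\delta$-Lipschitz gradient plus a linear map of operator norm $\sigma_{\max}/\mu$), hence $\cJ_\mu$ is co-coercive; and (ii) $\cJ_\mu$ is restricted strongly convex with respect to $\sw^\star$ by Lemma~\ref{lemma_penalized cost}, applied with a parameter $\rho>0$ that we keep free until the end. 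Writing $\Delta_{i-1}\define\grad\cJ_\mu(\sw_{i-1})-\grad\cJ_\mu(\sw^\star)$, the left-hand side of \eqref{bound_descent_0} equals
\[
\|\tsw_{i-1}-\mu\Delta_{i-1}\|^2=\|\tsw_{i-1}\|^2-2\mu\,\tsw_{i-1}\tran\Delta_{i-1}+\mu^2\|\Delta_{i-1}\|^2.
\]

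First I would dispose of the quadratic term: co-coercivity of $\cJ_\mu$ gives $\tsw_{i-1}\tran\Delta_{i-1}\ge\frac{\mu}{\mu\delta+\sigma_{\max}}\|\Delta_{i-1}\|^2$, i.e.\ $\mu^2\|\Delta_{i-1}\|^2\le\mu(\mu\delta+\sigma_{\max})\,\tsw_{i-1}\tran\Delta_{i-1}$. Substituting this, and using $\mu<(1-\sigma_{\max})/\delta$ --- which forces $2-\sigma_{\max}-\mu\delta>1>0$ --- gives
\[
\|\tsw_{i-1}-\mu\Delta_{i-1}\|^2\le\|\tsw_{i-1}\|^2-\mu\,(2-\sigma_{\max}-\mu\delta)\,\tsw_{i-1}\tran\Delta_{i-1}.
\]

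Next I would lower-bound the inner product. Since $\grad\cJ_\mu(\sw)=\grad\cJ(\sw)+\frac1\mu\cB\sw$, one has $\tsw_{i-1}\tran\Delta_{i-1}=\tsw_{i-1}\tran\big(\grad\cJ(\sw_{i-1})-\grad\cJ(\sw^\star)\big)+\frac1\mu\|\tsw_{i-1}\|_{\cB}^2$, and Lemma~\ref{lemma_penalized cost} bounds the first summand below by $\nu_\rho\|\tsw_{i-1}\|^2-\rho\|\tsw_{i-1}\|_{\cB}^2$, so $\tsw_{i-1}\tran\Delta_{i-1}\ge\nu_\rho\|\tsw_{i-1}\|^2+(\frac1\mu-\rho)\|\tsw_{i-1}\|_{\cB}^2$. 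Plugging this into the previous display and simplifying ($\mu(\frac1\mu-\rho)=1-\mu\rho$) produces
\[
\|\tsw_{i-1}-\mu\Delta_{i-1}\|^2\le\gamma_1\,\|\tsw_{i-1}\|^2-(2-\sigma_{\max}-\mu\delta)(1-\mu\rho)\,\|\tsw_{i-1}\|_{\cB}^2,
\]
with $\gamma_1=1-\mu\nu_\rho(2-\sigma_{\max}-\mu\delta)$, which is $<1$ since $\mu>0$, $2-\sigma_{\max}-\mu\delta>0$, and $\nu_\rho>0$ for any $\rho>0$ and any admissible $c\in(0,\nu/(2\delta))$.

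To finish, note the target right-hand side is $\gamma_1\|\tsw_{i-1}\|_{\cQ}^2=\gamma_1\|\tsw_{i-1}\|^2-\gamma_1\alpha\,\|\tsw_{i-1}\|_{\cB}^2$. Because $0<\gamma_1<1$ and $\alpha\le1$, we have $\gamma_1\alpha<1$, so it suffices to choose $\rho>0$ small enough that $(2-\sigma_{\max}-\mu\delta)(1-\mu\rho)\ge1$; this is feasible precisely because $2-\sigma_{\max}-\mu\delta>1$ --- for instance $\rho=\frac{1-\sigma_{\max}-\mu\delta}{\mu(2-\sigma_{\max}-\mu\delta)}$ attains equality in that requirement and also satisfies $\rho<1/\mu$. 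With such a $\rho$, the last display yields $\|\tsw_{i-1}-\mu\Delta_{i-1}\|^2\le\gamma_1\|\tsw_{i-1}\|_{\cQ}^2$, establishing \eqref{bound_descent_0}. I expect the only delicate point to be this matching step: a naive gradient-step estimate leaves the weaker bound $\gamma_1\|\tsw_{i-1}\|^2$ on the right, whereas we need $\gamma_1\|\tsw_{i-1}\|_{\cQ}^2$; the remedy is to keep, rather than discard, the nonnegative $\|\tsw_{i-1}\|_{\cB}^2$ term produced by restricted strong convexity and to size $\rho$ so that its coefficient dominates $\gamma_1\alpha$ --- which is exactly where both hypotheses $\mu<(1-\sigma_{\max})/\delta$ and $\alpha\le1$ come in.
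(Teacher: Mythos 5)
Your proposal is correct and follows essentially the same route as the paper's proof: expand the squared norm, use co-coercivity of $\grad\cJ_\mu$ with constant $\delta_\mu=\delta+\sigma_{\max}/\mu$ to absorb the quadratic term, apply the restricted strong convexity of Lemma~\ref{lemma_penalized cost}, and then choose $\rho$ (the paper allows any $0<\rho\leq \frac{1-\sigma_{\max}-\mu\delta}{\mu(2-\sigma_{\max}-\mu\delta)}$, matching your choice) so that the residual $\|\tsw_{i-1}\|_{\cB}^2$ coefficient dominates $\alpha\gamma_1$. The only cosmetic difference is that the paper splits the cross term into its $\cJ$ and $\cB$ parts immediately, whereas you carry the full $\grad\cJ_\mu$ difference and split later; the resulting intermediate bound is identical to the paper's.
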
 
\begin{proof}
 Since $\grad \cJ_\mu(\sw) = \grad \cJ(\sw) + {1 \over \mu} \cB \sw$, it holds that
\eq{
	& \big\|\tsw_{i-1}-\mu \big(\grad \cJ_\mu(\sw_{i-1})-\grad \cJ_\mu(\sw^\star) \big) \big\|^2 \nonumber \\
	&  = \|\tsw_{i-1}\|^2  \hspace{-0.5mm}-\hspace{-0.5mm} 2 \mu \tsw_{i-1}\tran \big(\grad \cJ(\sw_{i-1}) \hspace{-0.5mm}-\hspace{-0.5mm} \grad \cJ(\sw^\star) \big) \hspace{-0.5mm} - \hspace{-0.5mm} 2\| \tsw_{i-1}\|_{\cB}^2 \hspace{-0.5mm}+\hspace{-0.5mm} \mu^2 \| \grad \cJ_\mu(\sw_{i-1})\hspace{-0.5mm}-\hspace{-0.5mm}\grad \cJ_\mu(\sw^\star) \|^2   
	\label{squared_primal_descent}
	}
Note that $\grad \cJ(\sw)+{1 \over  \mu} \cB \sw$ is  $\delta_{\mu}=\delta+{1 \over \mu} \sigma_{\max}$-Lipschitz, thus it holds that \cite[Theorem 2.1.5]{nesterov2013introductory}:
\eq{
\| \grad \cJ_\mu(\sw_{i-1})\hspace{-0.5mm}-\hspace{-0.5mm}\grad \cJ_\mu(\sw^\star) \|^2    &= \| \grad \cJ(\sw_{i-1})-\grad \cJ(\sw^\star)+{1 \over \mu}\cB \tsw_{i-1}  \|^2 \nnb
&\leq  \delta_{\mu} \tsw_{i-1}\tran  \big(\grad \cJ(\sw_{i-1})-\grad \cJ(\sw^\star)+{1 \over \mu}\cB \tsw_{i-1}  \big)
} 
Substituting the previous inequality into \eqref{squared_primal_descent} we get
\eq{
	& \big\|\tsw_{i-1}-\mu \big(\grad \cJ_\mu(\sw_{i-1})-\grad \cJ_\mu(\sw^\star) \big) \big\|^2 \nonumber \\
	&  \leq \|\tsw_{i-1}\|^2  \hspace{-0.5mm}-\hspace{-0.5mm} \mu(2- \mu \delta_\mu ) \tsw_{i-1}\tran \big(\grad \cJ(\sw_{i-1}) \hspace{-0.5mm}-\hspace{-0.5mm} \grad \cJ(\sw^\star) \big) \hspace{-0.5mm} - \hspace{-0.5mm} (2-\mu \delta_\mu )\| \tsw_{i-1}\|_{\cB}^2 \hspace{-0.5mm} \nnb
	&  \leq \big(1- \mu \nu_\rho (2-\mu \delta_\mu)  \big)\|\tsw_{i-1}\|^2   \hspace{-0.5mm} - \hspace{-0.5mm} (2-\mu \delta_\mu)(1 - \rho \mu  )\| \tsw_{i-1}\|_{\cB}^2 \hspace{-0.5mm}
	\label{squared_primal_descent_2}}
where the last inequality follows from \eqref{eq:kappa_strong} and $2-\mu \delta_\mu=2-\sigma_{\max}-\mu \delta>0$ for $\mu < {2-\sigma_{\max} \over \delta} $. Letting $\gamma_1 = 1- \mu \nu_\rho (2- \mu \delta_{\mu})  $ and adding and subtracting $\alpha \gamma_1 \|\tsw_{i-1}\|_{\cB}$ to the right hand side of the previous inequality gives:
\eq{
	\hspace{-2mm}	\big\|\tsw_{i-1}\hspace{-0.8mm}-\hspace{-0.8mm}\mu \big(\grad \cJ_\mu(\sw_{i-1}) \hspace{-0.5mm} - \hspace{-0.5mm} \grad \cJ_\mu(\sw^\star) \big) \big\|^2  \hspace{-0.5mm} \leq \hspace{-0.5mm} \gamma_1 \|\tsw_{i-1}\|_{\cQ}^2   \hspace{-0.5mm} - \hspace{-0.5mm} \big( \hspace{-0.5mm} (2-\mu \delta_\mu)(1 - \rho \mu  ) \hspace{-0.5mm} - \hspace{-0.5mm} \alpha \gamma_1 \hspace{-0.5mm} \big) \hspace{-0.5mm} \| \tsw_{i-1}\|_{\cB}^2 \hspace{-0.5mm}   \label{bound_descent_00}
		}
where $\cQ = I - \alpha \cB$.	If we can ensure that
\eq{
-  \big((2-\mu \delta_{\mu} ) (1-\mu \rho) - \alpha \gamma_1 \big)\| \tsw_{i-1}\|_{\cB}^2 \leq 0 \label{inq:mu_rho_alpha}
}
 then  inequality \eqref{bound_descent_00} can be upper bounded by \eqref{bound_descent_0}. To ensure inquality \eqref{inq:mu_rho_alpha},  it is sufficient to find $\mu$ and $\rho$ such that
\eq{
(2-\mu \delta_{\mu} ) (1-\mu \rho) -\gamma_1 \alpha = (2-\sigma_{\max}-\mu \delta ) (1-\mu \rho) -\gamma_1 \alpha \geq 0
}
By noting that  $\gamma_1= 1- \mu \nu_\rho (2-\sigma_{\max}- \mu \delta)< 1$ for $\mu < {1- \sigma_{\max} \over \delta}$  and using  $\alpha \leq 1$, the above inequality is guaranteed to hold if
\eq{
0 < \rho \leq {1-   \sigma_{\max}-\mu \delta  \over \mu (2- \sigma_{\max}-\mu \delta )} \label{rho_extra}
}
\end{proof}
The previous lemma will be used to establish the following primal-dual error bound.
\begin{lemma}[\sc Error Bound]\label{lemma-inequality-2}
{	Under Assumption \ref{assump:cost}, if $\sy_0=0$ and the step-sizes satisfy  $\mu < { (1-\sigma_{\max}) \over \delta}$ and $\alpha \le 1$, it holds that
\eq{
	\|\tsz_i\|_{\cQ}^2+ \|\tsy_i\|_{\frac{1}{\alpha} I}^2 
	&\leq \gamma_1 \|\tsw_{i-1}\|_{\cQ}^2  +\gamma_2 \|\tsy_{i-1}\|_{\frac{1}{\alpha} I}^2 \label{important-inequality-2}
}
where $\cQ = I - \alpha \cB>0$, $\gamma_1 = 1- \mu \nu_\rho (2-\sigma_{\max}- \mu \delta) <1 $, and $\gamma_2 = 1 - \alpha \underline{\sigma} < 1$ for some $\rho>0$ with  $\nu_\rho$  given in \eqref{kappa_definition}.}
\end{lemma}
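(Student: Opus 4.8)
The plan is to expand the left-hand side of \eqref{important-inequality-2} directly from the error recursions \eqref{error_primal}--\eqref{error_dual}, pull out the ``primal part'' that Lemma \ref{lemma_descent_inequality} already controls, and dispose of the residual $\tsy$-terms using a minimum-eigenvalue bound on the range of $\cB$. First I would expand the dual update \eqref{error_dual}:
\[
\|\tsy_i\|_{\frac{1}{\alpha}I}^2=\frac{1}{\alpha}\|\tsy_{i-1}\|^2+2\,\tsy_{i-1}\tran\cB^{1/2}\tsz_i+\alpha\|\tsz_i\|_{\cB}^2 ,
\]
and since $\|\tsz_i\|_{\cQ}^2=\|\tsz_i\|^2-\alpha\|\tsz_i\|_{\cB}^2$ with $\cQ=I-\alpha\cB$, the two $\alpha\|\tsz_i\|_{\cB}^2$ contributions cancel on adding, leaving $\|\tsz_i\|_{\cQ}^2+\|\tsy_i\|_{\frac{1}{\alpha}I}^2=\|\tsz_i\|^2+2\,\tsy_{i-1}\tran\cB^{1/2}\tsz_i+\frac{1}{\alpha}\|\tsy_{i-1}\|^2$. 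Now I would substitute $\tsz_i=\big(\tsw_{i-1}-\mu(\grad\cJ_\mu(\sw_{i-1})-\grad\cJ_\mu(\sw^\star))\big)-\cB^{1/2}\tsy_{i-1}$ from \eqref{error_primal}; after completing the square, the cross terms between $\cB^{1/2}\tsy_{i-1}$ and the primal part cancel exactly against $2\,\tsy_{i-1}\tran\cB^{1/2}\tsz_i$, and one is left with the clean identity
\[
\|\tsz_i\|_{\cQ}^2+\|\tsy_i\|_{\frac{1}{\alpha}I}^2=\big\|\tsw_{i-1}-\mu(\grad\cJ_\mu(\sw_{i-1})-\grad\cJ_\mu(\sw^\star))\big\|^2-\|\tsy_{i-1}\|_{\cB}^2+\frac{1}{\alpha}\|\tsy_{i-1}\|^2 .
\]

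Next, the first term on the right is precisely the quantity appearing on the left of \eqref{bound_descent_0}, so Lemma \ref{lemma_descent_inequality} (whose hypotheses $\mu<(1-\sigma_{\max})/\delta$ and $\alpha\le 1$ are exactly those assumed here) bounds it by $\gamma_1\|\tsw_{i-1}\|_{\cQ}^2$. It then remains to show $-\|\tsy_{i-1}\|_{\cB}^2+\frac{1}{\alpha}\|\tsy_{i-1}\|^2\le\gamma_2\|\tsy_{i-1}\|_{\frac{1}{\alpha}I}^2$ with $\gamma_2=1-\alpha\underline{\sigma}$, which is equivalent to $\|\tsy_{i-1}\|_{\cB}^2\ge\underline{\sigma}\,\|\tsy_{i-1}\|^2$. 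Because $\cB$ is only positive semidefinite this is false on $\ker\cB$, so the step relies on the invariant that $\tsy_{i-1}$ lies in $\mathrm{range}(\cB^{1/2})$. I would prove this by induction from $\sy_0=0$: the dual recursion \eqref{dual-ascent_dist_not} adds only vectors of the form $\alpha\cB^{1/2}\sz_i$, so every $\sy_i\in\mathrm{range}(\cB^{1/2})$; by Remark \ref{remark_unique} the reference point $\sy_b^\star$ is also chosen in $\mathrm{range}(\cB^{1/2})$, hence $\tsy_{i-1}=\sy_{i-1}-\sy_b^\star\in\mathrm{range}(\cB^{1/2})=\mathrm{range}(\cB)$, and on this subspace the smallest eigenvalue of $\cB$ is $\underline{\sigma}$ by definition. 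Combining the two bounds gives \eqref{important-inequality-2}; finally I would note $\cQ=I-\alpha\cB>0$ (from $\alpha\le1$ and $\sigma_{\max}<1$), $\gamma_1<1$ by Lemma \ref{lemma_descent_inequality}, and $\gamma_2=1-\alpha\underline{\sigma}\in(0,1)$ since $0<\underline{\sigma}\le\sigma_{\max}<1$.

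The algebra itself is routine — a sequence of completions of squares whose cross terms conveniently cancel — so I expect the main obstacle to be the range-space bookkeeping: one has to track the invariant $\tsy_i\in\mathrm{range}(\cB^{1/2})$ faithfully through the recursion, because without the initialization $\sy_0=0$ and the matching choice of $\sy_b^\star$ from Remark \ref{remark_unique}, replacing $\|\cdot\|_{\cB}^2$ by $\underline{\sigma}\|\cdot\|^2$ is simply not valid for the rank-deficient $\cB$, and the claimed contraction factor $\gamma_2$ would be lost.
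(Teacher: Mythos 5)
Your proposal is correct and follows essentially the same route as the paper: both arrive at the identity $\|\tsz_i\|_{\cQ}^2+\|\tsy_i\|_{\frac{1}{\alpha}I}^2=\|\tsw_{i-1}-\mu(\grad\cJ_\mu(\sw_{i-1})-\grad\cJ_\mu(\sw^\star))\|^2+\frac{1}{\alpha}\|\tsy_{i-1}\|^2-\|\tsy_{i-1}\|_{\cB}^2$ by cancelling the cross terms, then invoke Lemma \ref{lemma_descent_inequality} for the primal part and the bound $\|\cB^{1/2}\tsy_{i-1}\|^2\ge\underline{\sigma}\|\tsy_{i-1}\|^2$ on $\mathrm{range}(\cB^{1/2})$ for the dual part. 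Your explicit attention to the range-space invariant (via $\sy_0=0$ and the choice of $\sy_b^\star$) is exactly the point the paper relegates to a footnote, and is handled correctly.
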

\begin{proof} Squaring both sides of \eqref{error_primal} and \eqref{error_dual} we get
\eq{
	\|\tsz_i\|^2&= \|\tsw_{i-1}-\mu \big(\grad \cJ_\mu(\sw_{i-1})-\grad \cJ_\mu(\sw^\star) \big)\|^2 +   \| \cB^{1 \over 2}  \tsy_{i-1}\|^2 \nonumber \\
	& \quad -2   \tsy_{i-1}\tran \cB^{1 \over 2} \left(\tsw_{i-1}-\mu \big(\grad \cJ_\mu(\sw_{i-1})-\grad \cJ_\mu(\sw^\star) \big)\right) 
	\label{er_sq_primal}
}
and
\eq{
	\|\tsy_i\|^2  =\|\tsy_{i-1}+\alpha \cB^{1 \over 2} \tsz_i \|^2 &= \|\tsy_{i-1}\|^2+\alpha^2 \| \cB^{1 \over 2} \tsz_i \|^2 + 2 \alpha \tsy_{i-1} \tran \cB^{1 \over 2} \tsz_i \nonumber \\
	&\overset{\eqref{error_primal}}{=} \|\tsy_{i-1}\|^2+\alpha^2 \| \tsz_i \|^2_{\cB} - 2 \alpha   \|\cB^{1 \over 2} \tsy_{i-1}\|^2 \nonumber \\ 
	& \quad +2 \alpha  \tsy_{i-1}\tran \cB^{1 \over 2}  \left(\tsw_{i-1}-\mu \big(\grad \cJ_\mu(\sw_{i-1})-\grad_{\ssw} \cJ_\mu(\sw^\star) \big)\right) \label{er_sq_dual}
}
Multiplying  equation \eqref{er_sq_dual} by ${1 \over \alpha} $ and adding to \eqref{er_sq_primal}, we get 
\eq{
\|\tsz_i\|^2_{\cQ} \hspace{-0.6mm}+\hspace{-0.6mm} \|\tsy_i\|^2_{\frac{1}{\alpha}I} \hspace{-0.6mm}=\hspace{-0.6mm} \|\tsw_{i-1} \hspace{-0.6mm}- \hspace{-0.6mm} \mu \big(\grad \cJ_\mu(\sw_{i-1})\hspace{-0.6mm}-\hspace{-0.6mm}\grad \cJ_\mu(\sw^\star) \big)\|^2 \hspace{-0.6mm}+\hspace{-0.6mm} \|\tsy_{i-1}\|^2_{\frac{1}{\alpha}I} \hspace{-0.6mm}-\hspace{-0.6mm}    \alpha   \|\cB^{1 \over 2} \tsy_{i-1}\|_{\frac{1}{\alpha}I}^2 \label{err_sum}
}
where $\cQ = I - \alpha  \cB$. Since  both $\sy_i$ and $\sy_b^\star$ lie in the range space\footnote{Since $\sy_0 = 0$ and $\sy_i = \sy_{i-1} + \alpha \cB^{1\over 2}\sz_i$, we know $\sy_i\in \mbox{range}(\cB^{1\over 2})$ for any $i$.} of $\cB^{1 \over 2}$, it  holds that $
\|\cB^{1 \over 2} \tsy_{i-1}\|^2 \geq 
\underline{\sigma} \|\tsy_{i-1}\|^2 $ -- see \cite{shi2014onthe}.  Thus, we can bound \eqref{err_sum} by
\eq{
	\|\tsz_i\|^2_{\cQ}+ \|\tsy_i\|_{\frac{1}{\alpha} I}^2 
	& \le\|\tsw_{i-1}-\mu \big(\grad \cJ_\mu(\sw_{i-1})-\grad \cJ_\mu(\sw^\star) \big)\|^2 \hspace{-0.5mm}+\hspace{-0.5mm} (1- \alpha \underline{\sigma})\|\tsy_{i-1}\|_{\frac{1}{\alpha} I}^2 \label{err_sum1}
}
Under the conditions given in Lemma \ref{lemma_descent_inequality}, we can substitute  inequality \eqref{bound_descent_0} in the above relation and get \eqref{important-inequality-2}. 
Note that that $\gamma_1 = 1- \mu \nu_\rho (2-\sigma_{\max}- \mu \delta) <1 $ if $\mu <  (1-\sigma_{\max}) / \delta$. Moreover, $\cQ = I - \alpha \cB>0$ and $\gamma_2 = 1 - \alpha \underline{\sigma} < 1$ for $\alpha \leq 1$ since $\sigma_{\max}<1$.
\end{proof}

The next theorem establishes the linear convergence of our proposed algorithm.
\begin{theorem}[\sc Linear convergence] \label{thm-1}
{ Under Assumption \ref{assump:cost}, $\sy_{0}=0$,  and if step-sizes satisfy
	\eq{\label{step-size-cond}
		\mu < \frac{(1-\sigma_{\max})}{\delta},\quad \alpha \leq \min\left\{ 1, \mu \nu_\rho (2-\sigma_{\max}- \mu \delta) \right\}.
	}
	It holds that $\|\tsw_i\|^2 \le C\gamma^i$ where $C>0$ and 
	\eq{
	\gamma 	\define \max\left\{ 1- \mu \nu_\rho (2-\sigma_{\max}- \mu \delta)/ (1-\alpha  \sigma_{\max}), 1 - \alpha \underline{\sigma} \right\} < 1
	}
	for some $\rho>0$ with  $\nu_\rho$  given in \eqref{kappa_definition}. 
	}
\end{theorem}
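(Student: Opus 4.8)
The plan is to turn the Error Bound (Lemma~\ref{lemma-inequality-2}) into a genuine one-step contraction for a single scalar energy function, and then transfer its geometric decay to $\|\tsw_i\|^2$ using only non-expansiveness of the proximal map.

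Concretely, I would introduce the energy $V_i \define \|\tsz_i\|_{\cQ}^2 + \|\tsy_i\|_{\frac{1}{\alpha}I}^2$ for $i\ge 1$, where $\cQ = I-\alpha\cB$. Under \eqref{step-size-cond} we have $\mu < (1-\sigma_{\max})/\delta$ and $\alpha\le 1$, so Lemma~\ref{lemma-inequality-2} applies and reads $V_i \le \gamma_1\|\tsw_{i-1}\|_{\cQ}^2 + \gamma_2\|\tsy_{i-1}\|_{\frac{1}{\alpha}I}^2$ with $\gamma_1 = 1-\mu\nu_\rho(2-\sigma_{\max}-\mu\delta)$ and $\gamma_2 = 1-\alpha\underline{\sigma}$; moreover, since $\cB$ has eigenvalues in $[0,1)$ and $\alpha\le 1$, the matrix $\cQ$ is symmetric with $(1-\alpha\sigma_{\max})I \leq \cQ \leq I$, and in particular $\cQ>0$. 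The right-hand side above contains $\|\tsw_{i-1}\|_{\cQ}^2$, which is not part of $V_{i-1}$, and the heart of the argument is to control it by $V_{i-1}$.

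For that I would invoke only the non-expansiveness of ${\rm \bf prox}_{\mu\cR}$: because $\tsw_{i-1} = {\rm \bf prox}_{\mu\cR}(\sz_{i-1}) - {\rm \bf prox}_{\mu\cR}(\sz^\star)$ whenever $\sw_{i-1}$ comes from the prox step \eqref{prox_step_non}, we get $\|\tsw_{i-1}\|\le\|\tsz_{i-1}\|$, and combining with $\cQ\leq I$ and $\cQ\geq(1-\alpha\sigma_{\max})I$ yields $\|\tsw_{i-1}\|_{\cQ}^2 \le \|\tsw_{i-1}\|^2 \le \|\tsz_{i-1}\|^2 \le \frac{1}{1-\alpha\sigma_{\max}}\|\tsz_{i-1}\|_{\cQ}^2$. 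Plugging this into Lemma~\ref{lemma-inequality-2} gives $V_i \le \gamma\,V_{i-1}$ with $\gamma = \max\{\gamma_1/(1-\alpha\sigma_{\max}),\,\gamma_2\}$, which is the rate in the statement. I would then verify $\gamma<1$: $\gamma_2 = 1-\alpha\underline{\sigma}<1$ is immediate, and $\gamma_1/(1-\alpha\sigma_{\max})<1$ amounts to $\mu\nu_\rho(2-\sigma_{\max}-\mu\delta)>\alpha\sigma_{\max}$, which holds because \eqref{step-size-cond} forces $\alpha\le\mu\nu_\rho(2-\sigma_{\max}-\mu\delta)$ and $\sigma_{\max}<1$ — so the second step-size bound is exactly what pushes the contraction factor below one. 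Iterating, $V_i \le \gamma^{i-1}V_1$ for all $i\ge 1$ (the arbitrary $\sw_0$ enters only through the finite constant $V_1$, and the contraction kicks in from $i\ge 2$ since $\sw_1$ is already a prox output), and a final application of non-expansiveness and the spectral bound, $\|\tsw_i\|^2 \le \|\tsz_i\|^2 \le \frac{1}{1-\alpha\sigma_{\max}}\|\tsz_i\|_{\cQ}^2 \le \frac{1}{1-\alpha\sigma_{\max}}V_i$, yields $\|\tsw_i\|^2 \le C\gamma^i$ with $C = V_1/\big((1-\alpha\sigma_{\max})\gamma\big)$.

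I expect the one delicate step to be exactly the conversion $\|\tsw_{i-1}\|_{\cQ}^2 \to \|\tsz_{i-1}\|_{\cQ}^2$: since ${\rm \bf prox}_{\mu\cR}$ acts blockwise across agents while $\cQ=I-\alpha\cB$ couples neighbors, the proximal operator is \emph{not} non-expansive in the $\cQ$-norm, so one cannot simply write $\|\tsw\|_{\cQ}\le\|\tsz\|_{\cQ}$; routing through the Euclidean norm repairs this but costs the factor $1/(1-\alpha\sigma_{\max})$, which is precisely why the extra step-size restriction $\alpha\le\mu\nu_\rho(2-\sigma_{\max}-\mu\delta)$ is required. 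Two further bookkeeping remarks: the hypothesis $\sy_0=0$ is used (through Lemma~\ref{lemma-inequality-2}) to keep $\sy_i$ and $\sy_b^\star$ in $\mbox{range}(\cB^{1/2})$, which is what makes $\|\cB^{1/2}\tsy_{i-1}\|^2\ge\underline{\sigma}\|\tsy_{i-1}\|^2$ available; and since the contraction only applies once an iterate has passed through a prox step, one must note separately that the base term $V_1$ is finite.
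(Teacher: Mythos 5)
Your proposal is correct and follows essentially the same route as the paper's own proof: both combine the Error Bound of Lemma~\ref{lemma-inequality-2} with the spectral bounds $(1-\alpha\sigma_{\max})I \le \cQ \le I$ and the non-expansiveness of ${\rm \bf prox}_{\mu\cR}$ to obtain a contraction with factor $\gamma=\max\{\gamma_1/(1-\alpha\sigma_{\max}),\gamma_2\}$, and both use the condition $\alpha\le\mu\nu_\rho(2-\sigma_{\max}-\mu\delta)$ together with $\sigma_{\max}<1$ to force $\gamma_1/(1-\alpha\sigma_{\max})<1$. The only (immaterial) difference is the choice of Lyapunov function --- you contract $\|\tsz_i\|_{\cQ}^2+\|\tsy_i\|_{\frac{1}{\alpha}I}^2$ while the paper contracts $\|\tsw_i\|^2+\|\tsy_i\|_{\frac{1}{\alpha\beta}I}^2$ after dividing through by $\beta=1-\alpha\sigma_{\max}$ --- which changes only the bookkeeping of the constant $C$ and the index at which the iteration starts.
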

\begin{proof}
 Assume $\alpha \le 1$ and note that $\cQ = I - \alpha \cB$. Thus, it holds that $\sigma_{\min}(\cQ) = 1 - \alpha \sigma_{\max}$ and $\sigma_{\max}(\cQ) = 1$. This implies that $(1-\alpha \sigma_{\max})\|x\|^2 \le \|x\|_{\cQ}^2 \le \|x\|^2$ for any $x \in \real^{KM}$. Therefore, it holds from Lemma \ref{lemma-inequality-2} that
\eq{
(1-\alpha \sigma_{\max})\|\tsz_i\|^2+ \|\tsy_i\|_{\frac{1}{\alpha} I}^2 
&\leq \gamma_1 \|\tsw_{i-1}\|^2  +\gamma_2 \|\tsy_{i-1}\|_{\frac{1}{\alpha} I}^2 \label{important-inequality-237sd}
}
when $\mu < \frac{(1-\sigma_{\max})}{\delta}$. 
Dividing by $\beta \define 1-\alpha \sigma_{\max}$ both sides of the above inequality, we have
\eq{
	\|\tsz_i\|^2+ \|\tsy_i\|_{\frac{1}{\alpha \beta} I}^2 
	&\leq \frac{\gamma_1}{\beta} \|\tsw_{i-1}\|^2  +\gamma_2 \|\tsy_{i-1}\|_{\frac{1}{\alpha \beta} I}^2. \label{important-inequality-237sd-2}
}
Clearly, $\beta \in (0, 1)$ when $\alpha \le 1$. Now we evaluate $\gamma_1/\beta$. It is easy to verify that
\eq{
\frac{\gamma_1}{\beta} = \left( 1- \mu \nu_\rho (2-\sigma_{\max}- \mu \delta)\right)/ (1- \alpha \sigma_{\max}) < 1 
}
when $\alpha \leq \mu \nu_\rho (2-\sigma_{\max}- \mu \delta) < \frac{\mu \nu_\rho (2-\sigma_{\max}- \mu \delta)}{\sigma_{\max}}$.  Next, from the non-expansiveness property of the proximal operator we have:
\eq{\label{23usdbds9}
	\|\tsw_i\|^2 = \|{\rm \bf prox}_{\mu \cR}(\ssz_i)-{\rm \bf prox}_{\mu \cR}(\ssz^\star) \|^2  \leq \|\tsz_i\|^2.
}
By substituting \eqref{23usdbds9} into \eqref{important-inequality-237sd-2} and letting $\gamma \define \max\{\gamma_1/\beta,\gamma_2\}$, we reach
\begin{align}
	\|\tsw_i\|^2 + \|\tsy_i\|_{\frac{1}{\alpha \beta} I}^2 
	&\leq \gamma \left( \|\tsw_{i-1}\|^2  + \|\tsy_{i-1}\|_{\frac{1}{\alpha \beta} I}^2 \right)
\end{align}
when step-sizes $\mu$ and $\alpha$ satisfy condition \eqref{step-size-cond}. We iterate the above inequality and get
\eq{
\|\tsw_i\|^2 \le \|\tsw_i\|^2 + \|\tsy_i\|_{\frac{1}{\alpha \beta} I}^2 \le \gamma^i (\|\tsw_0\|^2 + \|\tsy_0\|_{\frac{1}{\alpha \beta} I}^2 ),
}
which concludes the proof. 
\end{proof}

Next we show that when $R(w) = 0$, we can have a better upper bound for the dual step-size, which covers the EXTRA algorithm \cite{shi2015extra}. 
\begin{theorem}[\sc Linear convergence when $R(w) = 0$] \label{theorem_extra} { Under Assumption \ref{assump:cost}, if $R(w)=0$, $\sy_{0}=0$, and the step-sizes satisfy $
		\mu < \frac{(1-\sigma_{\max})}{\delta}$ and $\alpha \leq  1$,  
	it holds that $\|\tsw_i\|^2_{\cQ} \le C\gamma^i$ where
 $C>0$,  $\cQ = I - \alpha \cB>0$, and $$
		\gamma 	= \max\big\{ 1- \mu \nu_\rho (2-\sigma_{\max}- \mu \delta), 1 - \alpha \underline{\sigma} \big\} < 1$$ for some $\rho>0$ with  $\nu_\rho$  given in \eqref{kappa_definition}.
			}
\end{theorem}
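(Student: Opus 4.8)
The plan is to rerun the argument of Theorem \ref{thm-1}, but to exploit the fact that when $R(w)=0$ the map $\mathbf{prox}_{\mu\cR}$ is the identity, so that the error recursion \eqref{error_prox} reads simply $\tsw_i=\tsz_i$ for every $i$. This single identification is precisely what relaxes the dual step-size condition. Indeed, in the proof of Theorem \ref{thm-1} the passage from the $\cQ$-weighted bound of Lemma \ref{lemma-inequality-2} to a recursion in the Euclidean norm $\|\tsw_i\|^2$ (needed there because $\mathbf{prox}_{\mu\cR}$ is only Euclidean-nonexpansive) forced the lower bound $\|\tsz_i\|_{\cQ}^2\ge(1-\alpha\sigma_{\max})\|\tsz_i\|^2$ followed by a division by $\beta=1-\alpha\sigma_{\max}$, which inflated $\gamma_1$ to $\gamma_1/\beta$ and hence required $\alpha\le\mu\nu_\rho(2-\sigma_{\max}-\mu\delta)$ to keep the rate below one. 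When $\tsw_i=\tsz_i$ there is no norm to convert, so I can stay in the $\cQ$-norm throughout and that restriction is not needed.

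Concretely, I would start from Lemma \ref{lemma-inequality-2}, which under $\mu<(1-\sigma_{\max})/\delta$ and $\alpha\le 1$ yields
\[
\|\tsz_i\|_{\cQ}^2+\|\tsy_i\|_{\frac{1}{\alpha}I}^2\ \le\ \gamma_1\,\|\tsw_{i-1}\|_{\cQ}^2+\gamma_2\,\|\tsy_{i-1}\|_{\frac{1}{\alpha}I}^2,
\]
with $\gamma_1=1-\mu\nu_\rho(2-\sigma_{\max}-\mu\delta)$ and $\gamma_2=1-\alpha\underline{\sigma}$ for a suitable $\rho>0$ (for instance any $\rho$ satisfying \eqref{rho_extra}). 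Substituting $\tsz_i=\tsw_i$ on the left-hand side and putting $\gamma=\max\{\gamma_1,\gamma_2\}$ gives the one-step contraction
\[
\|\tsw_i\|_{\cQ}^2+\|\tsy_i\|_{\frac{1}{\alpha}I}^2\ \le\ \gamma\big(\|\tsw_{i-1}\|_{\cQ}^2+\|\tsy_{i-1}\|_{\frac{1}{\alpha}I}^2\big).
\]
Iterating this down to $i=0$ and using $\sy_0=0$ to discard the dual term at the start, I obtain $\|\tsw_i\|_{\cQ}^2\le\gamma^i\big(\|\tsw_0\|_{\cQ}^2+\|\tsy_0\|_{\frac{1}{\alpha}I}^2\big)=\gamma^i\|\tsw_0\|_{\cQ}^2$, i.e. the claim with $C=\|\tsw_0\|_{\cQ}^2$.

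It then only remains to verify that the constants lie in the stated ranges. Because $\alpha\le 1$ and $\sigma_{\max}(\cB)<1$, the matrix $\cQ=I-\alpha\cB>0$, so $\|\cdot\|_{\cQ}$ is a bona fide norm and the bound is meaningful. The condition $\mu<(1-\sigma_{\max})/\delta$ makes $2-\sigma_{\max}-\mu\delta>1>0$, and $\nu_\rho>0$ by Lemma \ref{lemma_penalized cost}, so $\gamma_1\in(0,1)$; and $\gamma_2=1-\alpha\underline{\sigma}<1$ since $\underline{\sigma}>0$ and $\alpha\le 1$. I do not expect any genuine obstacle here — the proof is strictly shorter than that of Theorem \ref{thm-1}; the only point needing care is recognizing that the $R=0$ case eliminates exactly the Euclidean-norm conversion whose $1/\beta$ penalty on $\gamma_1$ was what forced the more stringent step-size bound in \eqref{step-size-cond}.
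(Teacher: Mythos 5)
Your proof is correct and follows essentially the same route as the paper: invoke Lemma \ref{lemma-inequality-2}, observe that $R=0$ makes the proximal step the identity so $\tsw_i=\tsz_i$, take $\gamma=\max\{\gamma_1,\gamma_2\}$, and iterate the resulting contraction in the $\cQ$-weighted norm. Your added explanation of why this avoids the $1/\beta$ inflation of $\gamma_1$ that forces the tighter $\alpha$ condition in Theorem \ref{thm-1} is accurate and matches the paper's intent.
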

\begin{proof}
	From lemma \ref{lemma-inequality-2}, we know when $\mu < \frac{(1-\sigma_{\max})}{\delta }$ and $\alpha \le 1$ that inequality \eqref{important-inequality-2} holds. Since $R(w)=0$, we know $\sw_i = \sz_i$ from recursion \eqref{prox_step_non} and hence $\tsw_i = \tsz_i$. By letting $\gamma = \max\{\gamma_1, \gamma_2\}$, inequality \eqref{important-inequality-2} becomes $
		\|\tsw_i\|^2_{\cQ} + \|\tsy_i\|_{\frac{1}{\alpha} I}^2 
		\leq \gamma ( \|\tsw_{i-1}\|^2_{\cQ}  + \|\tsy_{i-1}\|_{\frac{1}{\alpha} I}^2 )$. 
	Since $\cQ = I - \alpha \cB$ is positive definite when $\alpha  \le 1$, we reach the linear convergence of $\tsw_i$. 
	\end{proof}
	
	In the above Theorem, we see that the convergence rate bound is upper bounded by two terms, one term is from the cost function and the other is from the network. This bound  shows how the network affects the convergence rate of the algorithm. For example, in Theorem \ref{theorem_extra}, assume that $\alpha=1$ and the network term dominates the convergence rate so that $\gamma= 1 - \alpha \underline{\sigma}=1 - \underline{\sigma}$. Recall that $\underline{\sigma}=\underline{\sigma}(\cB)$ is the smallest non-zero singular value (or eigenvalue) of the matrix $0.5(I-A)$. Thus, the effect of the network on the convergence rate is evident through the term  $1 - \underline{\sigma}$, which becomes close to one as the network becomes more sparse. Note when  $\alpha=1$, the algorithm recovers EXTRA as highlighted in Remark \ref{remark_other_alg}. In this case, our step-size condition is on the order of $O( (1-\sigma_{\max}) / \delta)$. Note that the in the original EXTRA proof in \cite[Theorem 3.7]{shi2015extra}, the step-size bound is on the  order of $O(  \nu_{\rho} (1-\sigma_{\max}) / \delta^2 ))$, which scales badly for ill-conditioned problems, i.e., if  $\delta$ is much larger than $\nu_{\rho}$. We remark that simulations of the proposed algorithm are provided in Section \ref{supp_simulations} in the supplementary material.
\subsubsection*{Acknowledgments}
This work  was  supported  in  part  by  NSF  grant  CCF-1524250. We would like to thank the anonymous reviewers for their insightful comments.
\medskip

{\small
\bibliographystyle{abbrvunsrtnat}
\bibliography{myref_review} 

\begin{thebibliography}{51}
\providecommand{\natexlab}[1]{#1}
\providecommand{\url}[1]{\texttt{#1}}
\expandafter\ifx\csname urlstyle\endcsname\relax
  \providecommand{\doi}[1]{doi: #1}\else
  \providecommand{\doi}{doi: \begingroup \urlstyle{rm}\Url}\fi

\bibitem[Boyd et~al.(2011)Boyd, Parikh, Chu, Peleato, and
  Eckstein]{boyd2011admm}
S.~Boyd, N.~Parikh, E.~Chu, B.~Peleato, and J.~Eckstein.
\newblock Distributed optimization and statistical learning via alternating
  direction method of multipliers.
\newblock \emph{Found. Trends Mach. Lear.}, 3\penalty0 (1):\penalty0 1--122,
  Jan. 2011.

\bibitem[Dominguez-Garcia et~al.(2012)Dominguez-Garcia, Cady, and
  Hadjicostis]{dominguez2012decentralized}
A.~D. Dominguez-Garcia, S.~T. Cady, and C.~N. Hadjicostis.
\newblock Decentralized optimal dispatch of distributed energy resources.
\newblock In \emph{51st IEEE Conference on Decision and Control (CDC)}, pages
  3688--3693, Maui, HI, USA, Dec. 2012.

\bibitem[Deng et~al.(2017)Deng, Lai, Peng, and Yin]{deng2017parallel}
W.~Deng, M.-J. Lai, Z.~Peng, and W.~Yin.
\newblock Parallel multi-block {ADMM} with o (1/k) convergence.
\newblock \emph{Journal of Scientific Computing}, 71\penalty0 (2):\penalty0
  712--736, 2017.

\bibitem[Zinkevich et~al.(2010)Zinkevich, Weimer, Li, and
  Smola]{zinkevich2010parallelized}
M.~Zinkevich, M.~Weimer, L.~Li, and A.~J. Smola.
\newblock Parallelized stochastic gradient descent.
\newblock In \emph{Advances in Neural Information Processing Systems (NIPS)},
  pages 2595--2603, Vancouver, Canada, 2010.

\bibitem[Agarwal and Duchi(2011)]{agarwal2011distributed}
A.~Agarwal and J.~C. Duchi.
\newblock Distributed delayed stochastic optimization.
\newblock In \emph{Advances in Neural Information Processing Systems (NIPS)},
  pages 873--881, Granada Spain, 2011.

\bibitem[Shamir et~al.(2014)Shamir, Srebro, and Zhang]{shamir2014communication}
O.~Shamir, N.~Srebro, and T.~Zhang.
\newblock Communication-efficient distributed optimization using an approximate
  {N}ewton-type method.
\newblock In \emph{International Conference on Machine Learning (ICML)}, pages
  1000--1008, {B}eijing, China, 2014.

\bibitem[Zhang and Lin(2015)]{zhang2015disco}
Y.~Zhang and X.~Lin.
\newblock Disco: Distributed optimization for self-concordant empirical loss.
\newblock In \emph{International Conference on Machine Learning (ICML)}, pages
  362--370, Lille, France, 2015.

\bibitem[Lee et~al.(2018)Lee, Lim, and Wright]{lee2018distributed}
C.-P. Lee, C.~H. Lim, and S.~J. Wright.
\newblock A distributed quasi-newton algorithm for empirical risk minimization
  with nonsmooth regularization.
\newblock In \emph{Proc. {ACM SIGKDD}}, pages 1646--1655, {L}ondon, United
  Kingdom, 2018.

\bibitem[Smith et~al.(2018)Smith, Forte, Chenxin, Takac, Jordan, and
  Jaggi]{smith2018cocoa}
V.~Smith, S.~Forte, M.~Chenxin, M.~Takac, M.~I. Jordan, and M.~Jaggi.
\newblock {CoCoA}: A general framework for communication-efficient distributed
  optimization.
\newblock \emph{Journal of Machine Learning Research}, 18\penalty0
  (230):\penalty0 1--49, 2018.

\bibitem[Peng et~al.(2016)Peng, Xu, Yan, and Yin]{peng2016arock}
Z.~Peng, Y.~Xu, M.~Yan, and W.~Yin.
\newblock {ARock}: an algorithmic framework for asynchronous parallel
  coordinate updates.
\newblock \emph{SIAM Journal on Scientific Computing}, 38\penalty0
  (5):\penalty0 A2851--A2879, 2016.

\bibitem[Li et~al.(2014)Li, Andersen, Park, Smola, Ahmed, Josifovski, Long,
  Shekita, and Su]{li2014scaling}
M.~Li, D.~G. Andersen, J.~W. Park, A.~J. Smola, A.~Ahmed, V.~Josifovski,
  J.~Long, E.~J. Shekita, and B.-Y. Su.
\newblock Scaling distributed machine learning with the parameter server.
\newblock In \emph{11th Symposium on Operating Systems Design and
  Implementation ($OSDI$)}, pages 583--598, {B}roomfield, Denver, Colorado,
  2014.

\bibitem[Lian et~al.(2017)Lian, Zhang, Zhang, Hsieh, Zhang, and
  Liu]{lian2017can}
X.~Lian, C.~Zhang, H.~Zhang, C.-J. Hsieh, W.~Zhang, and J.~Liu.
\newblock Can decentralized algorithms outperform centralized algorithms? {A}
  case study for decentralized parallel stochastic gradient descent.
\newblock In \emph{Advances in Neural Information Processing Systems (NIPS)},
  pages 5330--5340, {L}ong Beach, CA, USA, 2017.

\bibitem[Lian et~al.(2018)Lian, Zhang, Zhang, and Liu]{lian2018asynchronous}
X.~Lian, W.~Zhang, C.~Zhang, and J.~Liu.
\newblock Asynchronous decentralized parallel stochastic gradient descent.
\newblock In \emph{International Conference on Machine Learning (ICML)}, pages
  1--10, {S}tockholm, Sweden, 2018.

\bibitem[Sayed(2014{\natexlab{a}})]{sayed2014nowbook}
A.~H. Sayed.
\newblock Adaptation, learning, and optimization over neworks.
\newblock \emph{Foundations and Trends in Machine Learning}, 7\penalty0
  (4-5):\penalty0 311--801, 2014{\natexlab{a}}.

\bibitem[Shi et~al.(2015{\natexlab{a}})Shi, Ling, Wu, and Yin]{shi2015extra}
W.~Shi, Q.~Ling, G.~Wu, and W.~Yin.
\newblock {EXTRA}: An exact first-order algorithm for decentralized consensus
  optimization.
\newblock \emph{SIAM Journal on Optimization}, 25\penalty0 (2):\penalty0
  944--966, 2015{\natexlab{a}}.

\bibitem[Nedic and Ozdaglar(2009)]{nedic2009distributed}
A.~Nedic and A.~Ozdaglar.
\newblock Distributed subgradient methods for multi-agent optimization.
\newblock \emph{IEEE Transactions on Automatic Control}, 54\penalty0
  (1):\penalty0 48--61, 2009.

\bibitem[Sayed(2014{\natexlab{b}})]{sayed2014adaptive}
A.~H. Sayed.
\newblock Adaptive networks.
\newblock \emph{Proceedings of the IEEE}, 102\penalty0 (4):\penalty0 460--497,
  Apr. 2014{\natexlab{b}}.

\bibitem[Shi et~al.(2014)Shi, Ling, Yuan, Wu, and Yin]{shi2014onthe}
W.~Shi, Q.~Ling, K.~Yuan, G.~Wu, and W.~Yin.
\newblock On the linear convergence of the {ADMM} in decentralized consensus
  optimization.
\newblock \emph{IEEE Trans. Signal Process.}, 62\penalty0 (7):\penalty0
  1750--1761, 2014.

\bibitem[Jakoveti{\'c} et~al.(2015)Jakoveti{\'c}, Moura, and
  Xavier]{jakovetic2014linear}
D.~Jakoveti{\'c}, J.~M. Moura, and J.~Xavier.
\newblock Linear convergence rate of a class of distributed augmented
  {L}agrangian algorithms.
\newblock \emph{IEEE Transactions on Automatic Control}, 60\penalty0
  (4):\penalty0 922--936, 2015.

\bibitem[Iutzeler et~al.(2016)Iutzeler, Bianchi, Ciblat, and
  Hachem]{iutzeler2015explicit}
F.~Iutzeler, P.~Bianchi, P.~Ciblat, and W.~Hachem.
\newblock Explicit convergence rate of a distributed alternating direction
  method of multipliers.
\newblock \emph{IEEE Transactions on Automatic Control}, 61\penalty0
  (4):\penalty0 892--904, 2016.

\bibitem[Ling et~al.(2015)Ling, Shi, Wu, and Ribeiro]{ling2015dlm}
Q.~Ling, W.~Shi, G.~Wu, and A.~Ribeiro.
\newblock {DLM}: Decentralized linearized alternating direction method of
  multipliers.
\newblock \emph{IEEE Transactions on Signal Processing}, 63:\penalty0
  4051--4064, 2015.

\bibitem[Chang et~al.(2015)Chang, Hong, and Wang]{chang2015multi}
T.-H. Chang, M.~Hong, and X.~Wang.
\newblock Multi-agent distributed optimization via inexact consensus {ADMM}.
\newblock \emph{IEEE Transactions on Signal Processing}, 63\penalty0
  (2):\penalty0 482--497, Jan. 2015.

\bibitem[Shi et~al.(2015{\natexlab{b}})Shi, Ling, Wu, and Yin]{shi2015proximal}
W.~Shi, Q.~Ling, G.~Wu, and W.~Yin.
\newblock A proximal gradient algorithm for decentralized composite
  optimization.
\newblock \emph{IEEE Transactions on Signal Processing}, 63\penalty0
  (22):\penalty0 6013--6023, 2015{\natexlab{b}}.

\bibitem[Di~Lorenzo and Scutari(2016)]{di2016next}
P.~Di~Lorenzo and G.~Scutari.
\newblock Next: In-network nonconvex optimization.
\newblock \emph{IEEE Transactions on Signal and Information Processing over
  Networks}, 2\penalty0 (2):\penalty0 120--136, 2016.

\bibitem[Xu et~al.(2015)Xu, Zhu, Soh, and Xie]{xu2015augmented}
J.~Xu, S.~Zhu, Y.~C. Soh, and L.~Xie.
\newblock Augmented distributed gradient methods for multi-agent optimization
  under uncoordinated constant stepsizes.
\newblock In \emph{{\em Proc. 54th} IEEE Conference on Decision and Control
  (CDC)}, pages 2055--2060, {O}saka, Japan, 2015.

\bibitem[Nedic et~al.(2017)Nedic, Olshevsky, and Shi]{nedic2017achieving}
A.~Nedic, A.~Olshevsky, and W.~Shi.
\newblock Achieving geometric convergence for distributed optimization over
  time-varying graphs.
\newblock \emph{SIAM Journal on Optimization}, 27\penalty0 (4):\penalty0
  2597--2633, 2017.

\bibitem[Qu and Li(2018)]{qu2017harnessing}
G.~Qu and N.~Li.
\newblock Harnessing smoothness to accelerate distributed optimization.
\newblock \emph{IEEE Transactions on Control of Network Systems}, 5\penalty0
  (3):\penalty0 1245--1260, Sept. 2018.

\bibitem[Yuan et~al.(2019{\natexlab{a}})Yuan, Ying, Zhao, and
  Sayed]{yuan2019exactdiffI}
K.~Yuan, B.~Ying, X.~Zhao, and A.~H. Sayed.
\newblock Exact diffusion for distributed optimization and learning-{P}art {I}:
  {A}lgorithm development.
\newblock \emph{IEEE Transactions on Signal Processing}, 67\penalty0
  (3):\penalty0 708--723, Feb. 2019{\natexlab{a}}.

\bibitem[He et~al.(2018)He, Bian, and Jaggi]{he2018cola}
L.~He, A.~Bian, and M.~Jaggi.
\newblock {COLA}: Decentralized linear learning.
\newblock In \emph{Advances in Neural Information Processing Systems
  (NeurIPS)}, pages 4536--4546, {M}ontreal, Canada, 2018.

\bibitem[Scaman et~al.(2017)Scaman, Bach, Bubeck, Lee, and
  Massoulie]{seaman2017optimal}
K.~Scaman, F.~Bach, S.~Bubeck, Y.~T. Lee, and L.~Massoulie.
\newblock Optimal algorithms for smooth and strongly convex distributed
  optimization in networks.
\newblock In \emph{International Conference on Machine Learning (ICML)}, pages
  3027--3036, {S}tockholm, Sweden, 2017.

\bibitem[Yuan et~al.(2019{\natexlab{b}})Yuan, Ying, Zhao, and
  Sayed]{yuan2019exactdiffII}
K.~Yuan, B.~Ying, X.~Zhao, and A.~H. Sayed.
\newblock Exact diffusion for distributed optimization and learning-{P}art
  {II}: {C}onvergence analysis.
\newblock \emph{IEEE Transactions on Signal Processing}, 67\penalty0
  (3):\penalty0 724--739, Feb. 2019{\natexlab{b}}.

\bibitem[Xiao and Zhang(2014)]{xiao2014proximal}
L.~Xiao and T.~Zhang.
\newblock A proximal stochastic gradient method with progressive variance
  reduction.
\newblock \emph{SIAM Journal on Optimization}, 24\penalty0 (4):\penalty0
  2057--2075, 2014.

\bibitem[Chazan and Miranker(1969)]{chazan1969chaotic}
D.~Chazan and W.~Miranker.
\newblock Chaotic relaxation.
\newblock \emph{Linear Algebra and its Applications}, 2\penalty0 (2):\penalty0
  199--222, 1969.

\bibitem[Baudet(1978)]{baudet1978asynchronous}
G.~M. Baudet.
\newblock Asynchronous iterative methods for multiprocessors.
\newblock \emph{Journal of the ACM (JACM)}, 25\penalty0 (2):\penalty0 226--244,
  1978.

\bibitem[Bertsekas(1983)]{bertsekas1983distributed}
D.~P. Bertsekas.
\newblock Distributed asynchronous computation of fixed points.
\newblock \emph{Mathematical Programming}, 27\penalty0 (1):\penalty0 107--120,
  1983.

\bibitem[Tsitsiklis et~al.(1986)Tsitsiklis, Bertsekas, and
  Athans]{tsitsiklis1986distributed}
J.~Tsitsiklis, D.~Bertsekas, and M.~Athans.
\newblock Distributed asynchronous deterministic and stochastic gradient
  optimization algorithms.
\newblock \emph{IEEE Transactions on Automatic Control}, 31\penalty0
  (9):\penalty0 803--812, 1986.

\bibitem[Duchi et~al.(2012)Duchi, Agarwal, and Wainwright]{duchi2012dual}
J.~C. Duchi, A.~Agarwal, and M.~J. Wainwright.
\newblock Dual averaging for distributed optimization: Convergence analysis and
  network scaling.
\newblock \emph{IEEE Transactions on Automatic Control}, 57\penalty0
  (3):\penalty0 592--606, 2012.

\bibitem[Yuan et~al.(2016)Yuan, Ling, and Yin]{yuan2016convergence}
K.~Yuan, Q.~Ling, and W.~Yin.
\newblock On the convergence of decentralized gradient descent.
\newblock \emph{SIAM Journal on Optimization}, 26\penalty0 (3):\penalty0
  1835--1854, 2016.

\bibitem[Chen and Sayed(2013)]{chen2013distributed}
J.~Chen and A.~H. Sayed.
\newblock Distributed {P}areto optimization via diffusion strategies.
\newblock \emph{IEEE J. Sel. Topics Signal Process.}, 7\penalty0 (2):\penalty0
  205--220, April 2013.

\bibitem[Li et~al.(2019)Li, Shi, and Yan]{li2017nids}
Z.~Li, W.~Shi, and M.~Yan.
\newblock A decentralized proximal-gradient method with network independent
  step-sizes and separated convergence rates.
\newblock \emph{IEEE Transactions on Signal Processing}, 67\penalty0
  (17):\penalty0 4494--4506, Sept. 2019.

\bibitem[Chen and Ozdaglar(2012)]{chen2012fast}
A.~I. Chen and A.~Ozdaglar.
\newblock A fast distributed proximal-gradient method.
\newblock In \emph{Annual Allerton Conference on Communication, Control, and
  Computing}, pages 601--608, {M}onticello, IL, USA, Oct. 2012.

\bibitem[Aybat et~al.(2018)Aybat, Wang, Lin, and Ma]{aybat2018distributed}
N.~S. Aybat, Z.~Wang, T.~Lin, and S.~Ma.
\newblock Distributed linearized alternating direction method of multipliers
  for composite convex consensus optimization.
\newblock \emph{IEEE Transactions on Automatic Control}, 63\penalty0
  (1):\penalty0 5--20, 2018.

\bibitem[Latafat et~al.(2019)Latafat, Freris, and Patrinos]{latafat2017new}
P.~Latafat, N.~M. Freris, and P.~Patrinos.
\newblock A new randomized block-coordinate primal-dual proximal algorithm for
  distributed optimization.
\newblock \emph{{\em IEEE} Transactions on Automatic Control}, 64\penalty0
  (10):\penalty0 4050--4065, Oct. 2019.

\bibitem[Bot et~al.(2015)Bot, Csetnek, Heinrich, and
  Hendrich]{boct2015convergence}
R.~I. Bot, E.~R. Csetnek, A.~Heinrich, and C.~Hendrich.
\newblock On the convergence rate improvement of a primal-dual splitting
  algorithm for solving monotone inclusion problems.
\newblock \emph{Mathematical Programming}, 150\penalty0 (2):\penalty0 251--279,
  2015.

\bibitem[Chambolle and Pock(2016)]{chambolle2016ergodic}
A.~Chambolle and T.~Pock.
\newblock On the ergodic convergence rates of a first-order primal--dual
  algorithm.
\newblock \emph{Mathematical Programming}, 159\penalty0 (1-2):\penalty0
  253--287, Sept. 2016.

\bibitem[Chen et~al.(2013)Chen, Huang, and Zhang]{chen2013aprimal}
P.~Chen, J.~Huang, and X.~Zhang.
\newblock A primal-dual fixed point algorithm for convex separable minimization
  with applications to image restoration.
\newblock \emph{Inverse Problems}, 29\penalty0 (2):\penalty0 025011, Jan. 2013.

\bibitem[Metropolis et~al.(1953)Metropolis, Rosenbluth, Rosenbluth, Teller, and
  Teller]{metropolis1953equation}
N.~Metropolis, A.~W. Rosenbluth, M.~N. Rosenbluth, A.~H. Teller, and E.~Teller.
\newblock Equation of state calculations by fast computing machines.
\newblock \emph{The Journal of Chemical Physics}, 21\penalty0 (6):\penalty0
  1087--1092, 1953.

\bibitem[Xiao and Boyd(2004)]{xiao2004fast}
L.~Xiao and S.~Boyd.
\newblock Fast linear iterations for distributed averaging.
\newblock \emph{Systems \& Control Letters}, 53\penalty0 (1):\penalty0 65--78,
  2004.

\bibitem[Pillai et~al.(2005)Pillai, Suel, and Cha]{pillai2005perron}
S.~U. Pillai, T.~Suel, and S.~Cha.
\newblock The {P}erron-{F}robenius theorem: {S}ome of its applications.
\newblock \emph{IEEE Signal Processing Magazine}, 22\penalty0 (2):\penalty0
  62--75, 2005.

\bibitem[Li and Yan(2017)]{li2017primal}
Z.~Li and M.~Yan.
\newblock A primal-dual algorithm with optimal stepsizes and its application in
  decentralized consensus optimization.
\newblock \emph{available on arXiv:1711.06785}, Nov. 2017.

\bibitem[Nesterov(2013)]{nesterov2013introductory}
Y.~Nesterov.
\newblock \emph{Introductory Lectures on Convex Optimization: A Basic Course}.
\newblock Volume 87, Springer, 2013.

\end{thebibliography}
}

\newpage
\section*{  SUPPLEMENTARY MATERIAL for "A Linearly Convergent Proximal Gradient Algorithm for Decentralized  Optimization"}
\appendix
\section{Existence of a Fixed Point Proof for Lemma \ref{lemma:existence_fixed_optimality}}\label{supp_lemma_fixed}
To establish existence we will construct a point $(\sw^\star, \sy^\star, \sz^\star)$ that satisfies equations \eqref{primal-descent_dist_not-star}--\eqref{prox_step_non-star}. From assumption \eqref{assump:cost}, there exists a unique solution $w^\star$ for problem \eqref{decentralized1}. From the optimality condition,  there must exist a subgradient $r^\star \in \partial R(w^\star)$ such that 
\eq{\label{xcnsnsd8}
\frac{1}{K}\sum_{k=1}^K \grad J_k(w^\star) + r^\star = 0}
We see from the above equation that $r^\star$ is unique due to the uniqueness of $w^\star$. Now  define $z^\star \define \mu r^\star + w^\star$. It holds that $( z^\star-w^\star) =\mu r^\star $, i.e., $( z^\star-w^\star) \in \mu \partial R(w^\star)$. This implies that 
\eq{
w^\star = \argmin_w \left\{  R(w) + \frac{1}{2\mu}\|w - z^\star\|^2 \right\}. \label{prox_wstar}	
}
We next define $\sw^\star = \one_{K} \otimes w^\star$ and $\sz^\star = \one_{K} \otimes z^\star $. Relation \eqref{prox_wstar} implies that  equation \eqref{prox_step_non-star} holds. Also, since $\sz^\star = \mathds{1}_K \otimes z^\star$, it belongs to the null space of $\cB^\frac{1}{2}$ so that  $\cB^\frac{1}{2}\sz^\star = 0$. It remains to construct $\sy^\star$ that satisfies equation \eqref{primal-descent_dist_not-star}. Note that $\grad \cJ_\mu(\sw^\star) = \grad \cJ(\sw^\star) + {1 \over \mu} \cB \sw^\star = \grad \cJ(\sw^\star)$ due to the fact that $\sw^\star$ lies in the null space of $\cB$, and therefore
\eq{
(\mathds{1}_N\otimes I_M)\tran \big(\sw^\star - \sz^\star - \mu \grad \cJ_\mu(\sw^\star)\big) = - \mu K r^\star - \mu \sum_{k=1}^K \grad  J_k(w^\star) \overset{\eqref{xcnsnsd8}}{=} 0, \label{2387sdhsdh}
}
where the last equality holds because of \eqref{xcnsnsd8}. Equation \eqref{2387sdhsdh} implies that 
\eq{
\big( \sw^\star - \sz^\star - \mu \grad \cJ_{\mu}(\sw^\star) \big) \in \mbox{Null}(\mathds{1}_N\otimes I_M) =\mbox{Null}(\cB^{{1 \over 2}})^{\perp}= \mbox{Range}(\cB^{\frac{1}{2}})
}
where $\perp$ denotes the orthogonal complement of a set. As a result, there must exist a vector $\sy^\star$ that satisfies equation \eqref{primal-descent_dist_not-star}.

	\section{Numerical Simulations} \label{supp_simulations}
In this section we verify our results with numerical simulations. We consider the decentralized sparse logistic regression problem for three real datasets\footnote{{\small Covtype: www.csie.ntu.edu.tw, MNIST: yann.lecun.com, CIFAR10: www.cs.toronto.edu.}}: Covtype.binary, MNIST, and CIFAR10. The last two datasets have been transformed into binary classification problems by considering  digital two and four (`2' and `4') classes for MNIST, and cat and dog classes for CIFAR-10. In Covtype.binary we used 50,000 samples as training data and each data has dimension 54. We used 10,000 samples as training data from MNIST (with labels `2' and `4')  and each data has dimension 784. In CIFAR-10 we used 10,000 training data (with labels `cat' and `dog') and each data has dimension 3072. All features have been preprocessed by normalizing them to the unit vector with sklearn's normalizer\footnote{\url{https://scikit-learn.org}}. For the network, we generated a randomly connected network with $K=20$ nodes -- see Fig. \ref{fig-network}. The associated combination matrix $A$ is generated according to the Metropolis rule \cite{sayed2014nowbook,metropolis1953equation}. The decentralized sparse logistic regression problem takes the form
\eq{ 
	\min_{w\in \real^M} \frac{1}{K}\sum_{k=1}^K J_k(w) + \rho \|w\|_1	\quad \mbox{where}\quad J_k(w) = \frac{1}{L}\sum_{\ell=1}^{L}\ln(1+\exp(-y_{k,\ell} x_{k,\ell}\tran w)) + \frac{\lambda}{2}\|w\|^2 \nonumber
}
where $\{x_{k,\ell}, y_{k,\ell}\}_{\ell=1}^L$ are local data kept by agent $k$ and $L$ is the size of the local dataset. For all simulations, we assign data samples evenly to each local agent. We set $\lambda=10^{-4}$ and $\rho=0.002$ for Covtype, $\lambda=10^{-2}$ and $\rho=0.0005$ for CIFAR-10, and $\lambda=10^{-4}$ and $\rho=0.002$ for MNIST. We compare the proposed P2D2 method against two well-know proximal gradient-based decentralized algorithms that can handle non-smooth regularization terms: PG-EXTRA \cite{shi2015proximal} and decentralized linearized ADMM (DL-ADMM) \cite{chang2015multi,aybat2018distributed}. For each algorithm, we tune the step-size to the best possible convergence rate. The step-sizes employed in each method for each dataset are listed in Table 1. Also, the proposed method employs an additional step-size $\alpha$ which is set as $1,0.8$ and $1$ for Covtype, CIFAR-10 and MNIST, respectively. Figure \ref{fig-lr} shows that each local variable $w_{k,i}$  converges linearly to the global solution $w^\star$ for the proposed method \eqref{pd-1}--\eqref{pd-2}, which is consistent with Theorem \ref{thm-1}. The proposed method is slightly faster than DL-ADMM and PG-EXTRA due to the additional tunable parameter $\alpha$. Note that while DL-ADMM and PG-EXTRA are observed to converge linearly, no theoretical guarantees are shown in \cite{chang2015multi,shi2015proximal,aybat2018distributed}. 
  The simulation code is provided in the supplementary material.

\begin{table}[h!]
	\label{tab:my-table}
	\centering
	\begin{tabular}{|c|c|c|c|}
		\hline
		& Covtype & CIFAR-10 & MNIST \\ \hline
		DL-ADMM  & 0.0022  & 0.075   & 0.21  \\ \hline
		PG-EXTRA & 0.002   & 0.07    & 0.20  \\ \hline
		P2D2 (Proposed) & 0.0024  & 0.08    & 0.24  \\ \hline
	\end{tabular}
	\vspace{1mm}
	\caption{Step-sizes used in the simulation.}
\end{table}
\begin{figure}[H]
	\centering
	\includegraphics[scale=0.4]{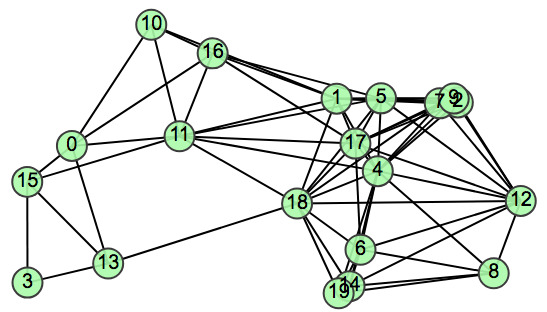}
	\caption{ \large The network topology used in the simulation.}
	\label{fig-network}
\end{figure}

\begin{figure}[t]
	\centering
	\includegraphics[scale=0.7]{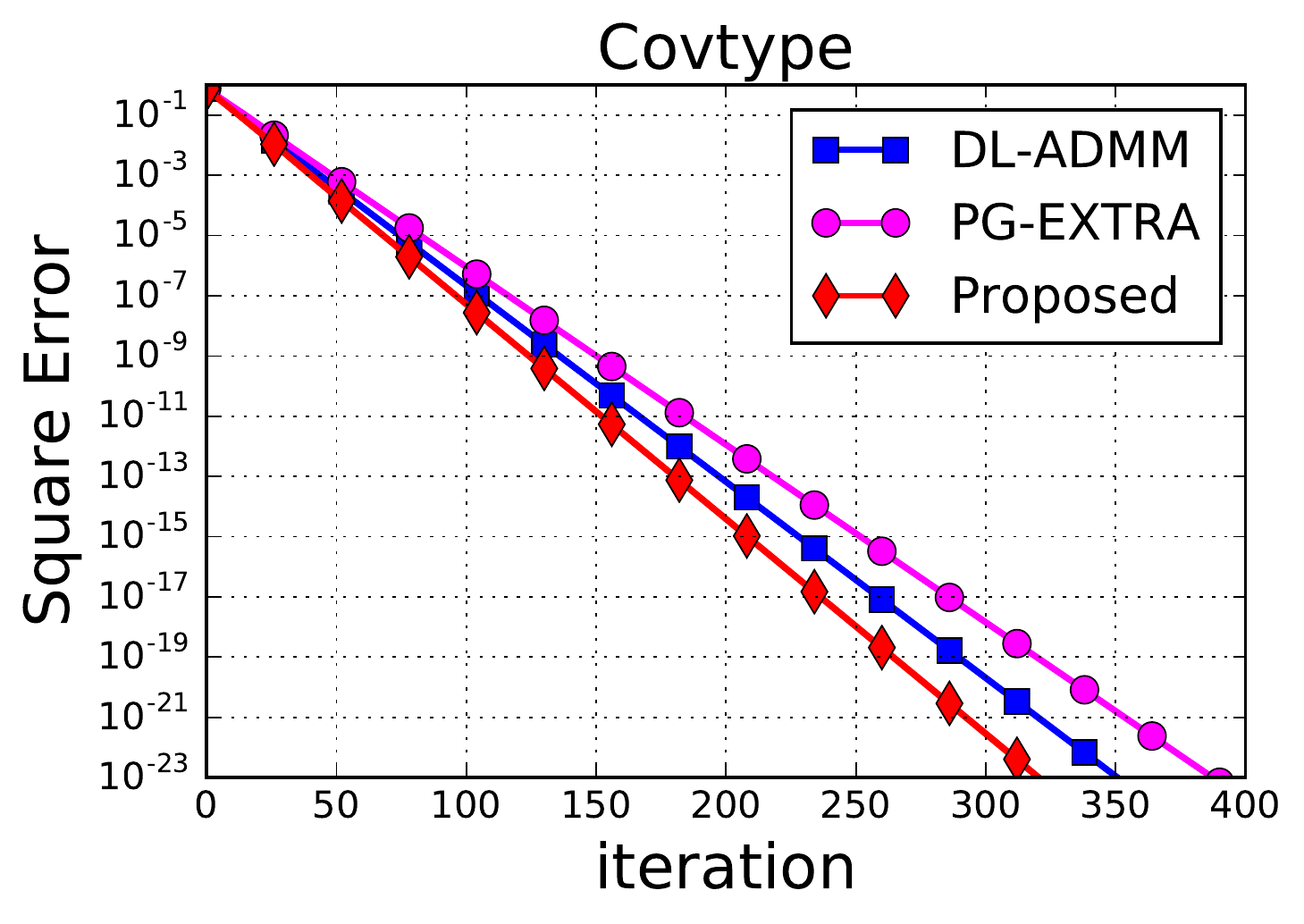}
	\includegraphics[scale=0.7]{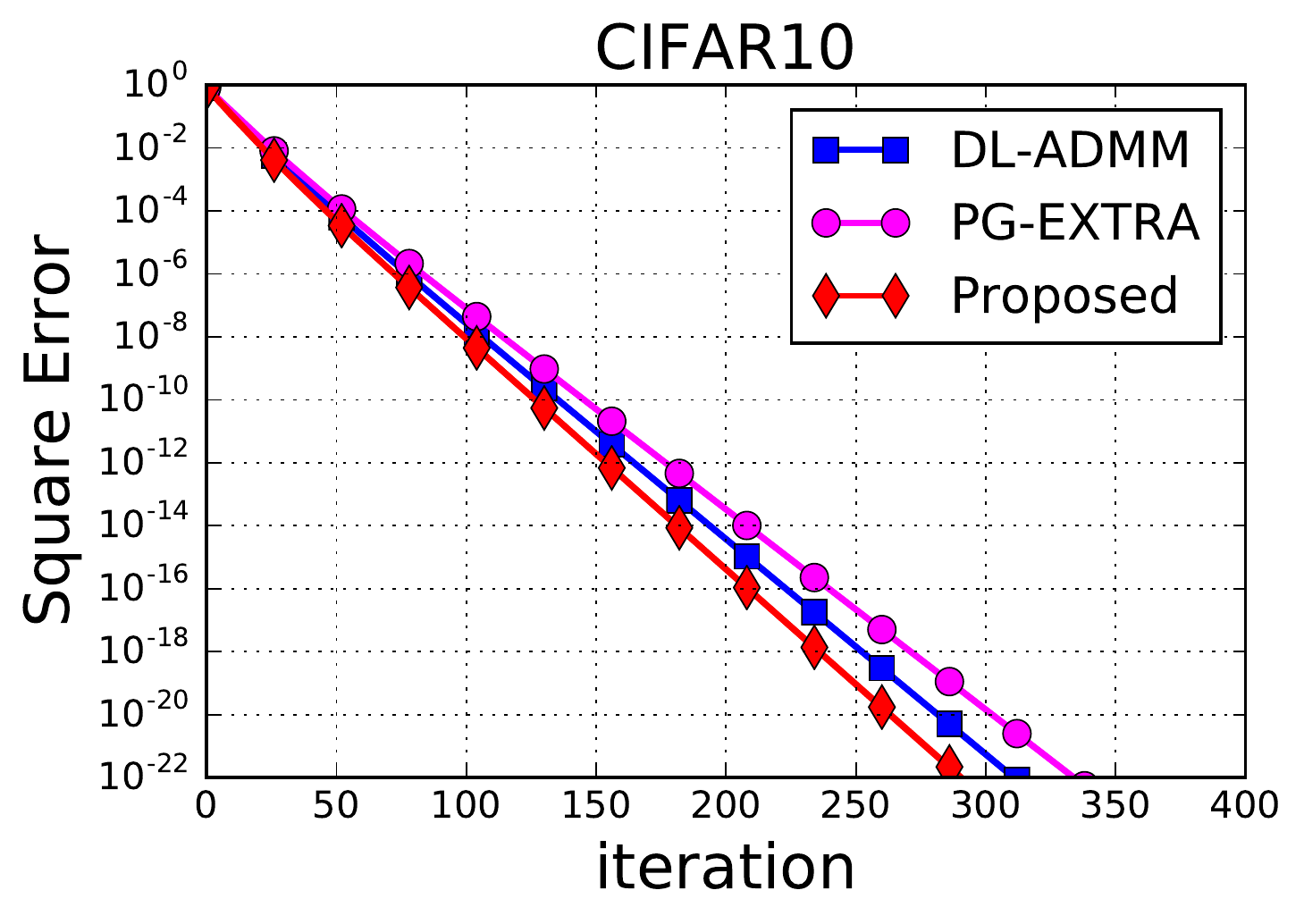}
	\includegraphics[scale=0.7]{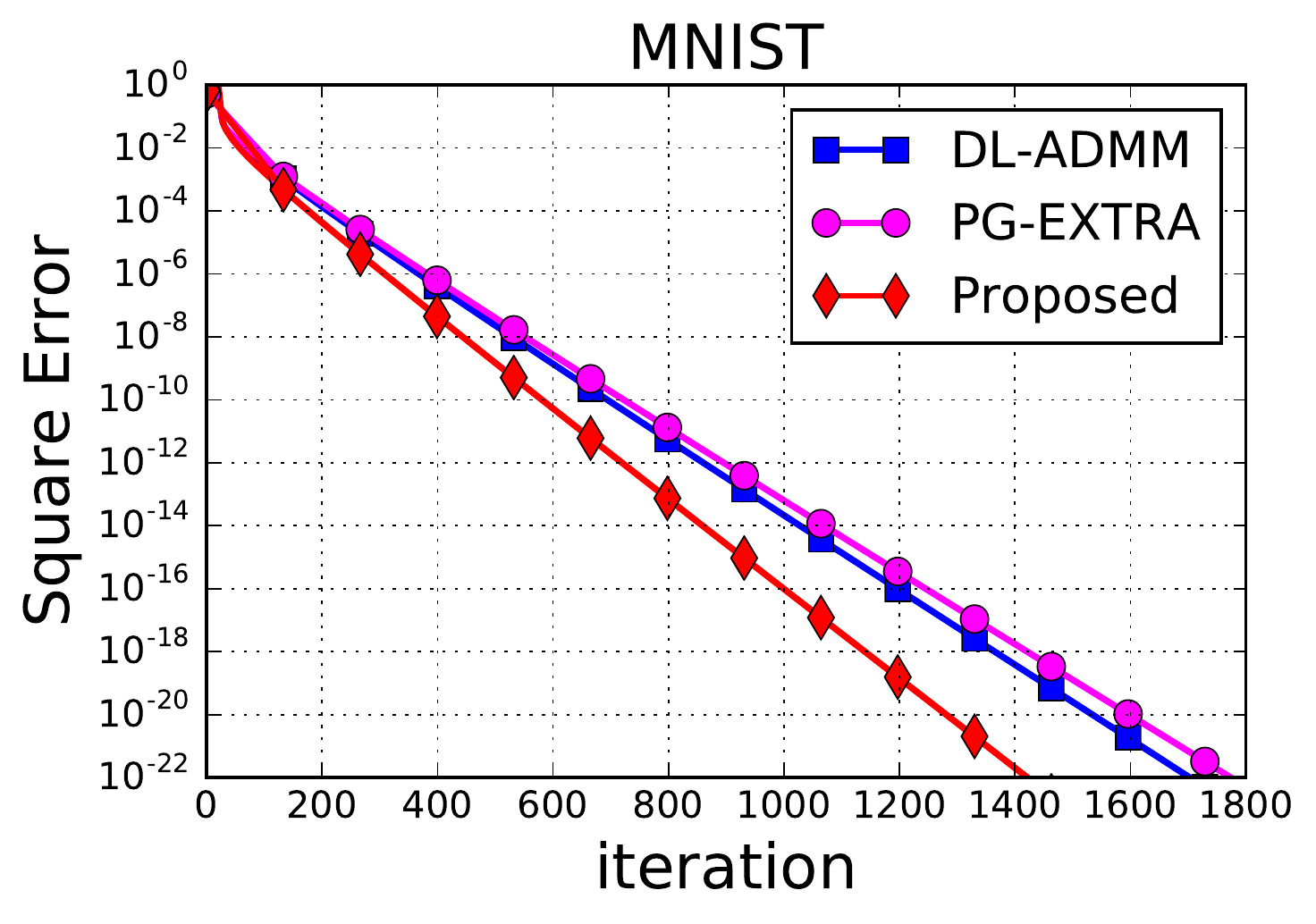}
	\caption{ \large Simulation Results. The $y$-axis indicates the relative squared error $\sum_{k=1}^{K}\|w_{k,i} - w^\star\|^2/\|w^\star\|^2$.}
	\label{fig-lr}
\end{figure} 
\end{document}